\documentclass[a4paper,12pt,oneside,reqno]{amsart}
\usepackage{amssymb,amsfonts,amsmath,amsthm,amscd, bbm}
\usepackage{graphicx, color}
\usepackage{bm}
\usepackage{enumerate,enumitem}
\usepackage[%
ocgcolorlinks,%
linkcolor=black,
filecolor=black,%
citecolor=black,%
urlcolor=black]{hyperref}
\setlength{\textwidth}{16cm}

\addtolength{\oddsidemargin}{-1.5cm}
\addtolength{\evensidemargin}{-1cm}     
\hoffset2.5pt
\thispagestyle{empty}
\numberwithin{equation}{section}  

\newtheorem{teor}{Theorem}[section]
 
\newtheorem{lem}[teor]{Lemma}

\newtheorem{rem}[teor]{Remark}


\newcommand{\beq}{\begin{equation}}
\newcommand{\eeq}{\end{equation}}
\newcommand{\bea}{\begin{aligned}}
\newcommand{\eea}{\end{aligned}}

\newcommand{\R}{\mathbb R}
\newcommand{\N}{\mathbb N}
\newcommand{\E}{\mathbb E}
\newcommand{\C}{{\mathbb C}}
\newcommand{\bbP}{\mathbb P}
\newcommand{\tr}{{\rm tr} \,}
\newcommand{\rme}{\mathrm e}

\newcommand{\EE}{{\sf{E}}}

\newcommand{\bfg}{{\boldsymbol g}}
\newcommand{\bfm}{{\boldsymbol m}}
\newcommand{\bfh}{{\boldsymbol h}}
\newcommand{\TAP}{{\rm TAP}}
\newcommand{\supp}{{\rm supp\,}}
\newcommand{\sign}{{\rm sign\,}}
\newcommand{\diag}{{\rm diag\,}}
\newcommand{\rmd}{{\rm d}}

\newcommand{\wt}{\widetilde}
\newcommand{\wh}{\widehat}
\newcommand{\bs}{\boldsymbol}

\newcommand{\ch}{}
\newcommand{\bch}{}
\newcommand{\ech}{\normalcolor}

\begin{document}

\title{On the concavity of the TAP free energy in the SK model}

\author[S. Gufler]{Stephan Gufler}
\address{S. Gufler  \\ J.W. Goethe-Universit\"at Frankfurt, Germany.}
\email{gufler@math.uni-frankfurt.de}

\author[A. Schertzer]{Adrien Schertzer}
\address{A. Schertzer \\  Institut f\"ur Angewandte Mathematik, Bonn University, Germany }
\email{aschertz@uni-bonn.de}

\author[M. A. Schmidt]{Marius A. Schmidt}
\address{M. A. Schmidt \\ J.W. Goethe-Universit\"at Frankfurt, Germany.}
\email{m.schmidt@mathematik.uni-frankfurt.de}

\thanks{We are grateful to Nicola Kistler for suggesting the problem of concavity of the TAP free energy and for helpful discussions. We also thank Christian Brennecke and V\'eronique Gayrard  for valuable discussions, and Jan Lukas Igelbrink for help with numerical simulations. We are also grateful to Mireille Capitaine for explaining to us how~\cite{CDFF} can be extended to the GOE case, and for telling us about the reference~\cite{Fan}. \ch We would like to thank an anonymous referee for helpful comments and pointers to the literature. \ech
	This work was partly funded by the Deutsche Forschungsgemeinschaft (DFG, German Research Foundation) under Germany’s Excellence Strategy - GZ 2047/1, Projekt-ID 390685813 and GZ 2151 - Project-ID 390873048, through Project-ID 211504053 - SFB 1060, and by DFG research grant contract number 2337/1-1, project 432176920.
}


\date{\today}

\begin{abstract}
We analyse the Hessian of the Thouless-Anderson-Palmer (TAP) free energy for the Sherrington-Kirkpatrick model, below the de Almeida-Thouless line, evaluated in Bolthausen's approximate solutions of the TAP equations. \ch We show that the empirical spectral distribution weakly converges to a measure with negative support below the AT line, and that the support includes zero on the AT line. In this ``macroscopic'' sense, we show that TAP free energy is concave in the order parameter of the theory, i.e.\ the random spin-magnetisations. This proves a spectral interpretation of the AT line. We also find different magnetizations than Bolthausen's approximate solutions at which the Hessian of the TAP free energy has positive outlier eigenvalues. In particular, when the magnetizations are assumed to be independent of the disorder, we prove that Plefka's second condition is equivalent to all eigenvalues being negative. On this occasion, we extend the convergence result of Capitaine et al.\ (Electron. J. Probab. {\bf 16}, no. 64, 2011) for the largest eigenvalue of perturbed complex Wigner matrices to the GOE.\ech
\end{abstract}

\maketitle
\setcounter{tocdepth}{1}
\tableofcontents

\section{Introduction}
We consider the standard Sherrington-Kirkpatrick (SK for short) model with an external field. In its random Hamiltonian
\begin{equation}\label{e:def-Hamiltonian}
H_{\beta,h}(\sigma):=\frac{\beta}{\sqrt{2N}}\sum_{i,j=1}^{N} g_{ij}\sigma_i \sigma_j+h\sum_{i=1}^{N}\sigma_i
\end{equation}
for $N\in\N$ spins $\sigma=(\sigma_i)\in \Sigma_N:=\{-1,1\}^N$, the disorder is modeled by i.i.d.\ centered Gaussians $g_{ij}$ with variance $1$ on a probability space $(\Omega, \mathcal{F} , \bbP )$. The parameters $\beta>0$ and $h \in \R$ are called inverse temperature and external field.
The partition function is given by
\begin{equation}
Z_N(\beta,h) := 2^{-N} \sum_{\sigma} \exp {H_{\beta,h}(\sigma)},
\end{equation}
and the free energy by
\begin{equation}
f_N(\beta,h) := \frac{1}{N} \log Z_N(\beta,h).
\end{equation}
A well-known consequence of Gaussian concentration of measure is that the free energy is self-averaging in the sense that
\begin{equation}
f(\beta,h) := \lim_{N \to \infty}\frac{1}{N} \log Z_N(\beta,h) = \lim_{N \to \infty}\frac{1}{N} \E\log Z_N(\beta,h) \text{ almost surely.}
\end{equation}
The existence of the limit on the right-hand side was established in a celebrated paper by Guerra and Toninelli \cite{Guerra}. The limit is given by the Parisi variational formula (see \cite{t,panchenko,Guerra1}). In high temperature ($\beta$ small), $f(\beta,h)$ is also given by the replica-symmetric formula, originally proposed by Sherrington and Kirkpatrick \cite{sk}:
\begin{teor}[\cite{sk,b2,chr}]\label{t:RS}
	There exists $\beta_0> 0$ such that for all $h, \, \beta$ with $0<\beta \leq \beta_0,$
	 \begin{equation}\label{e:t:RS}
		 f(\beta,h)=RS(\beta,h):=\inf_{q\geq0}\big\{\EE\log\cosh(h+\beta\sqrt{q}Z) +\frac{\beta^2(1-q)^2}{4}\big\},
	\end{equation}
	where $Z$ is a standard Gaussian.
\end{teor}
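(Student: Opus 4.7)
The plan is to prove matching upper and lower bounds on $f(\beta,h)$ and show they both coincide with $RS(\beta,h)$. The upper bound $f(\beta,h) \leq RS(\beta,h)$ is the easy direction and holds for every $\beta > 0$. It follows from Guerra's replica-symmetric interpolation: fix $q \geq 0$, let $(Z_i)_{i=1}^N$ be i.i.d.\ standard Gaussians independent of the disorder $(g_{ij})$, and consider
\begin{equation*}
H_t(\sigma) := \sqrt{t}\, \frac{\beta}{\sqrt{2N}}\sum_{i,j=1}^N g_{ij}\sigma_i\sigma_j + \sqrt{1-t}\, \beta \sqrt{q}\, \sum_{i=1}^N Z_i \sigma_i + h \sum_{i=1}^N \sigma_i.
\end{equation*}
Setting $\phi(t) := \frac{1}{N}\E\log\bigl(2^{-N}\sum_\sigma \exp H_t(\sigma)\bigr)$, a Gaussian integration by parts in $t$ yields
\begin{equation*}
\phi'(t) = \frac{\beta^2(1-q)^2}{4} - \frac{\beta^2}{4}\,\E\bigl\langle (R_{1,2} - q)^2 \bigr\rangle_t,
\end{equation*}
where $R_{1,2} := N^{-1}\sum_i \sigma^1_i \sigma^2_i$ is the overlap of two independent replicas and $\langle\cdot\rangle_t$ the associated Gibbs average. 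Dropping the manifestly non-positive second term, integrating from $0$ to $1$, and using the factorised computation $\phi(0) = \E \log\cosh(h + \beta\sqrt{q}Z)$ gives $f(\beta,h) \leq \E\log\cosh(h+\beta\sqrt{q}Z) + \beta^2(1-q)^2/4$; infimising over $q \geq 0$ produces the RS upper bound.

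The matching lower bound is where the high-temperature restriction enters. The strategy is to show that for $\beta \leq \beta_0$ sufficiently small the overlap concentrates on the RS saddle point $q^*$, in the sense that
\begin{equation*}
\lim_{N\to\infty}\int_0^1 \E\bigl\langle (R_{1,2} - q^*)^2 \bigr\rangle_t\, dt = 0,
\end{equation*}
where $q^*$ is the (then unique) solution of $q = \E \tanh^2(h + \beta\sqrt{q}\,Z)$ and realises the infimum in~\eqref{e:t:RS}. Plugging $q = q^*$ into the Guerra identity above and using this concentration upgrades the inequality to $\phi(1) - \phi(0) = \beta^2(1-q^*)^2/4 - o(1)$, yielding $f(\beta,h) \geq RS(\beta,h)$ in the limit $N\to\infty$.

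The main obstacle is therefore overlap concentration for $h \neq 0$. A naive unconditional second-moment bound $\E Z_N^2 \leq C(\E Z_N)^2$ only recovers the RS formula in a very narrow sub-region, so a more refined device is needed. Three standard options are available: Bolthausen's conditional second-moment method (conditioning on the empirical magnetisation and related linearised overlap statistics before applying Paley--Zygmund), Talagrand's ``smart path'' interpolation (controlling exponential moments of $R_{1,2} - q^*$ via an auxiliary cavity field), and Chatterjee's Stein-type argument via approximate TAP identities. Any of these yields an explicit, non-sharp threshold $\beta_0 > 0$. A supporting ingredient is that for small $\beta$ the fixed-point map $q \mapsto \E \tanh^2(h + \beta\sqrt{q}Z)$ is a strict contraction, so that $q^*$ is unique and depends smoothly on $(\beta,h)$; this follows from Gaussian integration by parts and an elementary derivative estimate $\beta^2\,\E\mathrm{sech}^4(h+\beta\sqrt{q}Z) < 1$. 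Pulling all of this together, the technical heart of the proof is the quantitative overlap-concentration estimate valid for $\beta \leq \beta_0$.
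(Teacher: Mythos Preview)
The paper does not give its own proof of this theorem; it is stated as background and attributed to \cite{sk,b2,chr}, with the remark that a proof ``based on an approach of Thouless--Anderson--Palmer'' can be found in \cite{b2} and that the threshold $\beta_0$ was improved in \cite{chr} ``using the same approach''. So there is no in-paper proof to match against.

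That said, your sketch follows a different route from the one the paper cites. The approach in \cite{b2} is Bolthausen's Morita-type argument: one runs the TAP iteration of Section~\ref{Bolthausen} to produce approximate magnetisations $\bfm^{(k)}$, conditions on the $\sigma$-algebra $\mathcal G_{k-1}$ generated by the iteration (under which the residual disorder $\bfg^{(k)}$ is still Gaussian with the explicit covariance~\eqref{e:gCov}), and then runs a conditional second-moment computation on the partition function. This is not the same as ``conditioning on the empirical magnetisation and related linearised overlap statistics''; the key object is the iteratively constructed $\bfm^{(k)}$, and the conditioning structure it provides is precisely what the rest of the present paper exploits. Your Guerra-interpolation-plus-overlap-concentration outline is closer to Talagrand's original treatment and is a perfectly valid alternative, with the advantage that the upper bound is immediate and holds for all $\beta$; the TAP/Morita approach, by contrast, ties the high-temperature region more directly to the AT condition and feeds naturally into the analysis of $\bs H^{(k)}$ that is the paper's actual subject.

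As a proof, your proposal is only a plan: the upper bound is complete, but for the lower bound you name three possible devices and carry out none of them. The overlap-concentration step is the entire content of the theorem at small $\beta$, so listing options is not a proof. If you intend this as a pointer to the literature that is fine, but you should be explicit that the lower bound is being quoted rather than proved.
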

\ch Guerra~\cite{Guerra-Sum} (see also Talagrand~\cite[Proposition 1.3.8]{Talagrand} where an independent proof of Latala is also mentioned) proved that for $h \neq 0$, the infimum is uniquely attained at $q = q (\beta, h)$ which satisfies
\begin{equation}
	\label{e:q}
	q=\EE\tanh^2(h+\beta\sqrt{q} Z).
\end{equation}
Here and in the following, $Z$ (under a probability $\sf{P}$ with associated expectation $\EE$) always denotes a standard Gaussian. For $h\neq 0$, the fixed point equation~\eqref{e:q} has a unique solution  which we denote in the sequel by $q$. \ech
A proof of Theorem~\ref{t:RS} based on an approach of Thouless-Anderson-Palmer (TAP for short)  \cite{tap} can be found in \cite{b2}. The critical temperature $\beta_0$ in Theorem~\ref{t:RS}  has then been improved in \cite{chr} using the same approach. Actually, $f(\beta, h) = RS(\beta, h)$ is believed to hold under the de Almeida-Thouless condition (AT for short), i.e.\ for $(\beta,h)$ with
\begin{equation}\label{e:AT}
	\beta^2\EE\frac{1}{\cosh^4(h+\beta\sqrt{q}Z)} \le 1,
\end{equation}
 but this problem is still open (however, Toninelli~\cite{To02} proved that when~\eqref{e:AT} is not satisfied, then the assertion of Theorem~\ref{t:RS} does not hold anymore). De Almeida and Thouless found the condition \eqref{e:AT} in 1978 in the context of an instability in the replica procedure \cite{AT} which is hard to make rigorous. 
 We also mention that Chen~\cite{Chen21} recently established the de Almeida-Thouless line as the transition curve between the replica symmetric and the replica symmetry breaking phases in a SK model with centered Gaussian external field.

To state our results, we first introduce the TAP free energy of the SK model. Analysis of the SK model in terms of the TAP equations was first given by \cite{tap}: shortening $\bar g_{ij}=\tfrac{1}{\sqrt 2}(g_{ij} + g_{ji})$, and for $\bfm=(m_i)\in[-1,1]^N$, \bch the TAP free energy \ech is given by 
\begin{equation}
	\label{e:FEN}
	\TAP_N(\bfm)=\frac{\beta}{\sqrt{N}}\sum_{\substack{i,j=1\\i<j}}^N  \bar g_{ij} m_i m_j + h\sum_{i=1}^N m_i + \frac{\beta^2}{4} N\left( 1-\frac1N\sum_{i=1}^N m_i^2 \right)^2 - \sum_{i=1}^N I(m_i),
\end{equation}
where for $ x\in[-1,1]$, 
\begin{equation}
	\label{e:tricks}
	I(x)=\frac{1+x}{2}\log(1+x) + \frac{1-x}{2}\log(1-x)=x\tanh^{-1}(x) - \log\cosh\tanh^{-1}(x)\,.
\end{equation}
The TAP free energy can be related to the free energy by a variational principle: Chen and Panchenko~\cite[Theorem 1]{chen} show that
\begin{equation}\label{e:chen}
	f(\beta,h)=\lim_{\epsilon\downarrow 0}\lim_{N\to\infty}\E\max N^{-1}\TAP_N(\bfm),
\end{equation}
where the maximum is over all $\bfm\in[-1,1]^N$ with $N^{-1}\sum_{i=1}^N m_i^2 \in[q_P-\epsilon,q_P+\epsilon]$, $q_P$ denoting the right edge of the support of the Parisi measure.
We also mention that an upper bound of the free energy in terms of the TAP free energy has recently been given by Belius~\cite{belius}. For the SK model with spherical spins, a variational principle for the TAP free energy has been proved in~\cite{kistler2}.

The TAP free energy can also be interpreted \bch non-rigorously as the power expansion up to second order of the Legendre transform of \ech the Gibbs potential of the SK model~\cite{plefka} (see~\cite{kistler} for further discussion).
A necessary condition of Plefka~\cite{plefka} for the convergence of the power expansion is that the magnetizations are in
\begin{equation}\label{Plefka1}
P^{1}_N := \{\bfm \in [-1,1]^N,\frac{\beta^2}{N}\sum_{i=1}^{N} \left(1-m_i^2\right)^2 < 1\}.
\end{equation}
$P^{1}_N $ is the set of magnetizations satisfying the so-called first Plefka condition. Before Plefka, this condition was also noted by Bray and Moore~\cite{bm} who investigated the Hessian matrix of the TAP free energy. For the stability of a diagrammatic expansion of the free energy, Sommers~\cite{sommers} also obtained condition~\eqref{Plefka1}.  There is no rigorous justification whether the first Plefka condition suffices for neglecting the higher-order terms (cf.\ also the discussion in~\cite{owen}). 

\bch If these higher-order terms can be neglected in Plefka's expansion, it is reasonable to expect concavity of the TAP free energy $\TAP_N$ in $\bfm$
as this functional approximates a Legendre transform. \ech Let us remark that \ech the TAP functional is not necessarily concave: we consider the Hessian
\begin{equation}\label{e:def-Hm}
	\bs H(\bfm):=\frac{\partial^2}{\partial m_i \, \partial m_j} \TAP_N(\bfm)
\end{equation}
at arbitrary magnetizations $\bfm\in[-1,1]^N$. \ch Denoting by $\lambda_1(\bs M)$ the largest eigenvalue of a real and symmetric matrix $\bs M$, we then \ech have:
\begin{teor}\label{t:pos-macr}
	There exists $\beta_0\in(0,1)$ such that for all $\beta>\beta_0$, $h\ne 0$, there exist $\epsilon>0$ and random $\bfm_N \in P^1_N$ such that
	\begin{equation}
		\lim_{N\to\infty}\bbP\left( \lambda_1\left( \bs H(\bfm_N)\right) >\epsilon \right)=1.
	\end{equation}
\end{teor}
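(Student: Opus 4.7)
\emph{Plan.} The plan is to exhibit, for each $\beta > \beta_0$, an explicit random magnetization $\bfm_N \in P_N^1$ at which the Hessian has a macroscopic positive outlier, by combining a deformed-Wigner spectral analysis with a Baik--Ben Arous--P\'ech\'e (BBP) type perturbation argument. The starting point is to compute $\bs H(\bfm)$ explicitly by differentiating~\eqref{e:FEN} twice. Absorbing the diagonal of $\bar{\bs g}/\sqrt N$ and the $O(1/N)$ diagonal correction arising from $\partial_{m_k}^2 \tfrac{\beta^2}{4} N(1-q_N)^2$ into a spectrally negligible error, one obtains
\begin{equation*}
\bs H(\bfm) \;=\; \underbrace{\frac{\beta}{\sqrt N}\bar{\bs g} - \beta^2(1-q_N)\bs I - \diag\!\left(\frac{1}{1-m_i^2}\right)}_{=:\,\bs K(\bfm)} \;+\; \underbrace{\frac{2\beta^2}{N}\bfm\bfm^{\top}}_{=:\,\bs R(\bfm)} \;+\; o(1),
\end{equation*}
with $q_N = N^{-1}\|\bfm\|^2$; here $\bs K(\bfm)$ is a deformed Wigner matrix (Wigner plus random diagonal) and $\bs R(\bfm)$ is a positive semidefinite rank-one perturbation with top eigenvalue $2\beta^2 q_N$ and direction $\bfm/\|\bfm\|$.

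\emph{Limiting spectrum and outlier.} I would then take $\bfm_N$ with i.i.d.\ components drawn from a law $\mu$ on $[-1,1]$, independent of the disorder. Classical deformed-Wigner results (Biane, Pastur--Shcherbina) yield that the empirical spectral measure of $\bs K(\bfm_N)$ converges almost surely weakly to the free additive convolution $\mu_{sc,\beta} \boxplus \nu_\mu$, where $\nu_\mu$ is the law of $-\beta^2(1-q) - 1/(1-M^2)$ with $M \sim \mu$ and $q = \E M^2$. Invoking the GOE extension of~\cite{CDFF} for additive low-rank perturbations of a deformed spectrum (itself a separate technical result of the paper, mentioned in the abstract), one obtains a BBP threshold $\theta_c(\mu)$ beyond which the almost-sure limit $\Lambda(\mu, \beta) := \lim_N \lambda_1(\bs H(\bfm_N))$ strictly exceeds the bulk edge of $\mu_{sc,\beta} \boxplus \nu_\mu$, prescribed by a subordination equation involving the $R$-transform of $\nu_\mu$.

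\emph{Choice of $\mu$ and main obstacle.} What remains is to choose $\mu$ satisfying (i) $\beta^2 \E[(1-M^2)^2] < 1$ (so that $\bfm_N \in P_N^1$ almost surely) and (ii) $\Lambda(\mu, \beta) > 0$. For $\mu = \delta_m$ the machinery collapses to standard BBP for the pure semicircle and gives, in the supercritical regime $m^2 > 1/(2\beta)$,
\begin{equation*}
\Lambda(\delta_m,\beta) \;=\; 3\beta^2 m^2 - \beta^2 + \frac{1}{2m^2} - \frac{1}{1-m^2},
\end{equation*}
which becomes positive after optimization in $m$ only for $\beta$ roughly above $\sqrt 2$. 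To reach $\beta_0 \in (0,1)$ one takes a non-degenerate $\mu$ (e.g.\ a two-atom $\mu = p\delta_{m_1} + (1-p)\delta_{m_2}$) and exploits the fact that $\mu_{sc,\beta} \boxplus \nu_\mu$ has support strictly wider than the pure shift $2\beta - \E \nu_\mu$ of the semicircle whenever $\nu_\mu$ is non-degenerate (variance inflation of the free convolution). The main technical obstacle is the preceding BBP step, namely extending~\cite{CDFF} to the GOE setting at the precision needed (almost-sure convergence, explicit outlier location) for a deformed, non-semicircular bulk. A cleaner alternative route, presumably the one the paper takes, is to deduce the theorem from the equivalence — proved elsewhere in the paper — between Plefka's second condition $P_N^2$ and asymptotic non-positivity of every Hessian eigenvalue: any $\bfm_N \in P_N^1 \setminus P_N^2$ then suffices, and one only needs the strict inclusion $P_N^2 \subsetneq P_N^1$ to be robust and quantitative for all $\beta > \beta_0$ with $\beta_0 < 1$.
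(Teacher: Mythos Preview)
Your decomposition of $\bs H(\bfm)$ is correct, but the approach of taking $\bfm_N$ \emph{independent} of the disorder cannot reach $\beta_0<1$ without an argument you do not supply. You yourself note that the degenerate choice $\mu=\delta_m$ only gives a positive outlier for $\beta$ roughly above $\sqrt{2}$, and the claim that a non-degenerate two-atom $\mu$ fixes this via ``variance inflation'' of the free convolution is a heuristic, not an argument: widening the bulk of $\mu_{sc,\beta}\boxplus\nu_\mu$ does not in itself force the BBP outlier to be positive, and you give no computation. The suggested alternative route via the Plefka-2 equivalence also fails as stated: that equivalence (Theorem~\ref{t:pos} in the paper) is proved only under the AT condition and for the specific limiting empirical law $\mathcal L(\tanh(h+\beta\sqrt q Z))$, so it does not furnish the statement ``any $\bfm_N\in P_N^1\setminus P_N^2$ suffices'' for all $\beta>\beta_0$ and $h\ne 0$.

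The paper's proof avoids all free-probability machinery by choosing a magnetization \emph{correlated} with the disorder: let $\bs v$ be the top eigenvector of $\beta N^{-1/2}\bar{\bfg}$ and set $m_{N,i}=\alpha\,\sign(v_i)$. Then $\bs v^T\bs H(\bfm_N)\bs v$ is a lower bound for $\lambda_1$, and one evaluates it directly: the GOE part contributes the full edge $2\beta$ (rather than the $O(1)$ contribution a generic direction would give), the diagonal part contributes $-\beta^2(1-\alpha^2)-1/(1-\alpha^2)$, and the rank-one part contributes $2\beta^2\alpha^2(N^{-1/2}\sum_i|v_i|)^2\to 4\beta^2\alpha^2/\pi$ since $\bs v$ is Haar-distributed on the sphere. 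Optimizing in $\alpha$ yields $\beta_0=\tfrac{\pi}{2}(\sqrt{1+4/\pi}-1)\approx 0.798<1$. The idea you are missing is precisely this correlation: with $\bfm$ independent of $\bar{\bfg}$ you pay the BBP price of only $\theta+\beta^2/\theta$ from the Wigner part, whereas aligning with the top eigenvector extracts the full $2\beta$ in the Rayleigh quotient and makes the argument both elementary and sharp enough for $\beta_0<1$.
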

This observation is proved in Section~\ref{s:p-pos-macr}. \ch Now the question arises whether concavity is also lost in the vicinity of the maximizer of $\TAP_N$, as the magnetization $\bfm_N$ for which Theorem~\ref{t:pos-macr} can be proved is somewhat arbitrary (see~\eqref{e:def-msign}) and might not be in the domain over which the maximum is taken in the variational principle~\eqref{e:chen}. \ech
 
The fixed points of the TAP equations~\cite{tap}
\begin{equation}\label{e:TAP}
	m_i= \tanh\left( h + \frac{\beta}{\sqrt{N}} \sum_{\substack{j=1\\j\neq i}}^N \bar g_{ij} m_j - \beta^2 \left(1-\frac1N\sum_{j=1}^N m_j^2\right) m_i \right),\quad i=1,\ldots,N
\end{equation}
are the critical points of the TAP free energy $\TAP_N$ (see also e.g.\ \cite{Talagrand, C10,FMM21,CFM23,Jacobian} for analysis of the TAP equations).
As we are not able to control these fixed points, we base our analysis on Bolthausen's algorithm \cite{b1,b2} which yields a sequence $\bfm^{(k)}\in[-1,1]^N$ of magnetizations that are considered as an approximation of the solutions of~\eqref{e:TAP}.
In~\cite{b1}, the magnetizations $\bfm^{(k)}$ are constructed by a two-step Banach algorithm: $m^{(0)}_i := 0$, $m^{(1)}_i:=\sqrt{q}$, and then iteratively 
\begin{equation}\label{e:two-step}
m^{(k+1)}_i = \tanh\left( h + \frac{\beta}{\sqrt{N}} \sum_{\substack{j=1\\j\neq i}}^N \bar g_{ij} m_j^{(k)}- \beta^2 (1-q) m_i^{(k-1)}\right), 
\end{equation}
for $k\geq 1$. \bch In the present paper, we use the algorithm from~\cite{b2}, which is a slight modification of the one in~\cite{b1}.
This definition of the magentizations $\bs m^{(k)}$ is recalled in Section~\ref{Bolthausen}. \ech
Bolthausen \cite{b1,b2} proves that such sequence of magnetisations converges, in the sense of~\eqref{e:B-conv} below, up to the AT-line. Precisely, by means of a sophisticated conditioning procedure which will be recalled in Section~\ref{Bolthausen}, Bolthausen shows that the iterates satisfy  
\begin{equation}\label{e:B-conv}
\lim_{k,l \to \infty} \lim_{N \to \infty} \frac{1}{N} \sum_{i=1}^N \E\left[ \left( m_i^{(k)} - m_i^{(l)} \right)^2 \right] = 0, 
\end{equation}
{\it provided} $(\beta, h)$ satisfy the AT-condition. \ch Under a high-temperature condition, \ech Chen and Tang \cite[Theorem~3]{CT} obtain that
\begin{equation}
	\lim_{k \to \infty} \lim_{N \to \infty} \frac{1}{N} \sum_{i=1}^N \E\left[ \left( \langle \sigma_i\rangle - m_i^{(k)} \right)^2 \right] = 0, 
\end{equation}
where $\langle\sigma_i\rangle$ denotes the Gibbs average of $\sigma_i$ under the Hamiltonian~\eqref{e:def-Hamiltonian}. It is crucial to emphasize that due to the factor $1/N$ in the distance~\eqref{e:B-conv} and the limit $N\uparrow +\infty$ first, and only in a second step $k,l \uparrow +\infty$, it is not clear whether the convergence of Bolthausen's approximate solutions is sufficiently strong for the interpretation of our results. Notwithstanding, the following suggests that Bolthausen's magnetizations are {\it good enough} when it comes to computing the limiting free energy within the TAP approximation: 
\begin{teor}\label{t:FEN0}
	For all $\beta>0$, $h\ne 0$ satisfying~\eqref{e:AT}, the TAP free energy evaluated at the Bolthausen approximate fixed points converges to the replica symmetric functional,  
 \begin{equation}\label{e:t:FEN0}
		\lim_{k\to\infty}\lim_{N\to\infty}N^{-1}\TAP_N(\bfm^{(k)})=RS(\beta,h) \, \text{in } L_1(\bbP).
	\end{equation}
\end{teor}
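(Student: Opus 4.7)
The plan is to rewrite the Hamiltonian term of $\TAP_N(\bfm^{(k)})$ using the TAP iteration \eqref{e:two-step}, reducing everything to local averages of the iterates. Rearranging \eqref{e:two-step} gives, for every $i$,
\[
\frac{\beta}{\sqrt N}\sum_{j\ne i}\bar g_{ij}\,m_j^{(k)} \;=\; \tanh^{-1}(m_i^{(k+1)}) - h + \beta^2(1-q)\,m_i^{(k-1)}.
\]
Multiplying by $m_i^{(k)}/2$, summing over $i$ and exploiting the symmetry $\bar g_{ij}=\bar g_{ji}$ yields
\[
\frac{\beta}{N\sqrt N}\sum_{i<j}\bar g_{ij}\,m_i^{(k)}m_j^{(k)} = \frac{1}{2N}\sum_i m_i^{(k)}\tanh^{-1}(m_i^{(k+1)}) - \frac{h}{2N}\sum_i m_i^{(k)} + \frac{\beta^2(1-q)}{2N}\sum_i m_i^{(k)}m_i^{(k-1)}.
\]
Substituting this into~\eqref{e:FEN} and using the identity $I(x) = x\tanh^{-1}(x) - \log\cosh\tanh^{-1}(x)$, one represents $N^{-1}\TAP_N(\bfm^{(k)})$ as a sum of empirical averages of the form $\frac{1}{N}\sum_i F(m_i^{(k-1)},m_i^{(k)},m_i^{(k+1)})$, plus the deterministic piece $\frac{\beta^2}{4}(1-\frac{1}{N}\sum_i(m_i^{(k)})^2)^2$.

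The next step is to pass to the limit $N\to\infty$ at fixed $k$ via Bolthausen's conditioning framework from~\cite{b2}. Its state-evolution analysis identifies the asymptotic joint empirical distribution of $(m_i^{(k-1)},m_i^{(k)},m_i^{(k+1)})$ with the law of the triple $(\tanh(h+\beta\sqrt q Z_{k-1}),\tanh(h+\beta\sqrt q Z_k),\tanh(h+\beta\sqrt q Z_{k+1}))$ for a centered Gaussian vector $(Z_{k-1},Z_k,Z_{k+1})$ whose covariances are determined by Bolthausen's recursion. Under~\eqref{e:AT}, the Cauchy property~\eqref{e:B-conv} forces all pairwise correlations to tend to $1$ as $k\to\infty$, so that in the iterated limit the three coordinates collapse onto a single $m=\tanh(h+\beta\sqrt q Z)$. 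This yields $\frac{1}{N}\sum_i(m_i^{(k)})^2\to q$, $\frac{1}{N}\sum_i m_i^{(k)}m_i^{(k-1)}\to q$, $\frac{1}{N}\sum_i m_i^{(k)}\tanh^{-1}(m_i^{(k+1)})\to \EE[m(h+\beta\sqrt q Z)]$, and $\frac{1}{N}\sum_i \log\cosh\tanh^{-1}(m_i^{(k)})\to\EE[\log\cosh(h+\beta\sqrt q Z)]$, all in $L_1(\bbP)$.

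Inserting these limits into the decomposition of the first paragraph, the $h\EE[m]$ contributions cancel pairwise, and Gaussian integration by parts $\EE[Z\tanh(h+\beta\sqrt q Z)]=\beta\sqrt q(1-q)$ combined with~\eqref{e:q} collapses the three terms of order $\beta^2 q(1-q)$ to zero. What remains is exactly $\EE[\log\cosh(h+\beta\sqrt q Z)]+\frac{\beta^2(1-q)^2}{4}=RS(\beta,h)$, as claimed. The most delicate step is the control of $\frac{1}{N}\sum_i \log\cosh\tanh^{-1}(m_i^{(k)})$, whose integrand is singular at $\pm 1$ and for which na\"ive $L_2$-Cauchy bounds on $m_i^{(k)}$ are insufficient. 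This is resolved by the exact identity $\tanh^{-1}(m_i^{(k)})=h+A_i^{(k-1)}-\beta^2(1-q)\,m_i^{(k-2)}$ furnished by~\eqref{e:two-step}, where $A_i^{(k-1)}:=\frac{\beta}{\sqrt N}\sum_{j\ne i}\bar g_{ij}m_j^{(k-1)}$ is sub-Gaussian with variance uniformly bounded in $N$; this provides the uniform integrability required to push the state-evolution convergence through this unbounded functional.
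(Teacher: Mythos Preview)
Your overall strategy---express the Hamiltonian term via the iteration and reduce to empirical averages---is essentially what the paper does, but there is a genuine mismatch that must be repaired.

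\textbf{The identity from \eqref{e:two-step} is not valid for the iterates in this paper.} The paper explicitly states (just below \eqref{e:two-step}) that it uses the modified algorithm of~\cite{b2} recalled in Section~\ref{Bolthausen}, not the two-step recursion~\eqref{e:two-step} from~\cite{b1}. For the~\cite{b2} iterates, $\tanh^{-1}(\bfm^{(k+1)})=\bfh^{(k+1)}$ is given by~\eqref{e:h-it} in terms of the auxiliary vectors $\zeta^{(s)}$, and your exact relation $\frac{\beta}{\sqrt N}\sum_{j\ne i}\bar g_{ij}m_j^{(k)} = \tanh^{-1}(m_i^{(k+1)}) - h + \beta^2(1-q)m_i^{(k-1)}$ simply does not hold. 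The paper's substitute is Lemma~\ref{l:b1b2}: it proves that $\bfm^{(k)}$ is an \emph{approximate} TAP fixed point at the \emph{same} step $k$, i.e.\ $\frac{\beta}{\sqrt N}\bar\bfg\,\bfm^{(k)} = \tanh^{-1}(\bfm^{(k)}) - h\mathbf{1} + \beta^2(1-q)\bfm^{(k)} - \Delta^{(k)}$ with $\|\Delta^{(k)}\|\to 0$ as $N\to\infty$ then $k\to\infty$. This collapses your three iterates $k-1,k,k+1$ onto a single $k$ and removes the need for any joint state evolution across steps; everything then follows from the single-step law of large numbers, Lemma~\ref{l:LLN}.

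\textbf{Two further points.} First, your proposed route, even for the~\cite{b1} iterates, would require a \emph{multi-step} LLN for the joint empirical distribution of $(m_i^{(k-1)},m_i^{(k)},m_i^{(k+1)})$; the paper only states and uses the single-step Lemma~\ref{l:LLN}, so you would have to import and verify the stronger statement from Bolthausen's work. Second, your closing worry about the singularity of $\log\cosh\tanh^{-1}$ is unnecessary in the~\cite{b2} framework: by construction $\tanh^{-1}(m_i^{(k)})=h_i^{(k)}$, so $\frac1N\sum_i\log\cosh\tanh^{-1}(m_i^{(k)})=\frac1N\sum_i\log\cosh(h_i^{(k)})$, and since $x\mapsto\log\cosh x$ is $1$-Lipschitz with linear growth, Lemma~\ref{l:LLN} applies directly---no separate sub-Gaussian argument is needed. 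In summary, your decomposition and the final cancellation via Gaussian integration by parts are exactly those of the paper, but the key input you invoke (the exact two-step identity) must be replaced by Lemma~\ref{l:b1b2}.
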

By the above, we shall henceforth refer to Bolthausen's magnetisations as {\it approximate solutions} of the TAP-equations.
The proof of Theorem~\ref{t:FEN0} is given in Section~\ref{s:TAP-FE}.
A result similar to Theorem~\ref{t:FEN0} occurs in Theorem 2 of Chen and Panchenko~\cite{chen}.
After the prepublication of this paper, the preprint of Gayrard~\cite{Gayrard} appeared where the almost sure convergence in Theorem~\ref{t:FEN0} is shown.
Under the AT condition~\eqref{e:AT}, Bolthausen's magnetizations $\bfm^{(k)}$ actually satisfy Plefka's first condition~\eqref{Plefka1} with high probability as $N\to\infty$: indeed, it follows from Lemma~\ref{l:LLN} below that 
 \begin{equation}\label{Plefka1lim}
\lim_{N \to \infty}\frac{\beta^2}{N}\sum_{i=1}^{N} \left(1-{m_i^{(k)}}^2\right)^2=\beta^2\EE\frac{1}{\cosh^4(h+\beta\sqrt{q}Z)} \quad \text{in } L_1(\bbP)\,.
\end{equation}
As a consequence, if the AT condition~\eqref{e:AT} holds with strict inequality, then with probability tending to $1$ as $N\to\infty$, Bolthausen's approximate solution satisfies the first Plefka condition, $\bfm^{(k)}\in P^1_N$.
That the AT condition and the first Plefka condition are related for suitable magnetizations was clear to Plefka \cite{plefka}.

We now investigate the concavity of the $\TAP_N(\bfm)$ functional in the Bolthausen magnetizations, that is, we study the Hessian $\bs H^{(k)}$ of the $\TAP$ free energy evaluated in $\bfm^{(k)}$,
\begin{equation}\label{e:def-Hk0}
	\bs H^{(k)}:=\frac{\partial^2}{\partial m_i \, \partial m_j} \TAP_N(\bfm) \Big|_{\bfm=\bfm^{(k)}}.
\end{equation}
We consider the weak limit of the empirical distribution of the eigenvalues $\lambda_i(\bs H^{(k)})$ ($i=1,\ldots,N$) of $\bs H^{(k)}$,
\begin{equation}
	\mu_{\bs H^{(k)}}=\frac1N \sum_{i=1}^N\delta_{\lambda_i(\bs H^{(k)})},
\end{equation}
which we show to be concentrated strictly below $0$ if the AT condition~\eqref{e:AT} holds with strict inequality, and to touch zero if~\eqref{e:AT} holds with equality. \ch This gives a spectral interpretation of the AT line. Similar observations have been made non-rigorously in \cite{chr2} (in the $N\to\infty$ limit), and are contained implicitly in \cite{plefka} (through relation~\eqref{Plefka1lim}). \ech
\begin{teor}
	\label{t:H0}
	For all $\beta>0$, $h\ne 0$ satisfying condition~\eqref{e:AT}, the empirical spectral distribution $\mu_{\bs H^{(k)}}$ converges weakly in distribution as $N\to\infty$ followed by $k\to\infty$ to a deterministic limiting measure~$\mu$. If~\eqref{e:AT} holds with strict inequality, then~$\mu(t,\infty)=0$ for some $t<0$. If~\eqref{e:AT} holds with equality, then~$\sup\{t\in\R:\mu(t,\infty)>0\}=0$.
\end{teor}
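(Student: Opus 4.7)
My plan is the following. A direct differentiation of~\eqref{e:FEN} yields
\[
	\bs H(\bfm) = \frac{\beta}{\sqrt N}\,\bs W + \frac{2\beta^2}{N}\,\bfm\bfm^\top - \bs D(\bfm),
\]
where $\bs W := (\bar g_{ab}\mathbbm{1}[a\neq b])$ and $\bs D(\bfm)$ is diagonal with entries $\beta^2(1 - N^{-1}|\bfm|^2) + (1 - m_a^2)^{-1}$. Specializing to $\bfm = \bfm^{(k)}$, the rank-one term has no effect on the bulk ESD by interlacing, and $\bs W$ differs from a true GOE only by a diagonal matrix of vanishing operator norm; so it suffices to study the ESD of $(\beta/\sqrt N)\bs W - \bs D^{(k)}$ with $\bs D^{(k)} := \bs D(\bfm^{(k)})$.

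To deal with the correlation between $\bs W$ and $\bs D^{(k)}$, which is a nonlinear functional of $\bs W$ through the iteration~\eqref{e:two-step}, I would invoke Bolthausen's conditioning procedure recalled in Section~\ref{Bolthausen}: conditional on the iterates $\bfm^{(1)},\dots,\bfm^{(k)}$, the matrix $\bar g$ splits as an explicit rank-$O(k)$ piece plus an independent residual symmetric Gaussian matrix of the correct variance. Absorbing the finite-rank correction into the negligible part and applying the deformed-Wigner theorem (Pastur's theorem for a GOE plus an independent diagonal with converging ESD) yields
\[
	\mu_{\bs H^{(k)}} \Rightarrow \mu_k := \sigma_\beta \boxplus \nu_k,
\]
where $\sigma_\beta$ is the semicircle law of radius $2\beta$ and $\nu_k$ is the weak limit of the ESD of $\bs D^{(k)}$. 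Lemma~\ref{l:LLN} identifies $\nu_k$ (after $k\to\infty$ if needed) as the law $\nu$ of $-\beta^2(1-q) - \cosh^2(h+\beta\sqrt q Z)$, and continuity of $\boxplus$ under weak convergence gives $\mu_k \to \mu := \sigma_\beta \boxplus \nu$.

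For the right edge $E_+$ of $\mu$ I would use the subordination characterization $G_\mu(z) = G_\nu(z - \beta^2 G_\mu(z))$, from which
\[
	E_+ \;=\; \min\{f(w) : w > \sup\supp\nu\}, \qquad f(w) := w + \beta^2\, G_\nu(w).
\]
Since $\sup\supp\nu = -\beta^2(1-q) - 1$, the function $f$ is strictly convex on that interval, with $f'(w) \to -\infty$ at the left endpoint and $f'(w) \to 1$ as $w\to+\infty$, so its minimum is attained at a unique interior point $w^*$. At $w_0 := -\beta^2(1-q)$, using the identity $\EE[\cosh^{-2}(h+\beta\sqrt q Z)] = 1 - q$ that follows from~\eqref{e:q}, I compute
\[
	f(w_0) = 0, \qquad f'(w_0) = 1 - \beta^2\, \EE[\cosh^{-4}(h+\beta\sqrt q Z)].
\]
Hence, under the AT equality, $f'(w_0) = 0$, so $w^* = w_0$ and $E_+ = 0$, giving $\sup\{t : \mu(t,\infty) > 0\} = 0$. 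Under strict AT, $f'(w_0) > 0$ forces $w^* < w_0$ and therefore $E_+ = f(w^*) < f(w_0) = 0$, so $\mu((E_+,\infty)) = 0$ with $E_+ < 0$.

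The hard part will be the decoupling step: while Bolthausen's conditioning heuristically reduces the problem to a deformed-Wigner one with an independent diagonal, making the $N$-limit rigorous and then controlling the subsequent $k$-limit requires careful tracking of both the rank-$O(k)$ and residual-variance corrections, most likely through concentration of the empirical Stieltjes transform around its deterministic equivalent together with Hoffman--Wielandt-type bounds for the finite-rank perturbation.
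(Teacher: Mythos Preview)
Your proposal is correct and follows essentially the same route as the paper: Bolthausen's conditioning reduces $\bs H^{(k)}$ to a GOE plus an independent diagonal, modulo a finite-rank and a small-Frobenius correction (the paper makes this precise in Lemmas~\ref{l:Cov}--\ref{l:H} and then invokes Lemma~\ref{l:sp}), the limiting ESD is identified as the free convolution $\mu_\beta\boxplus\hat\nu$, and the right edge is located via the function $H_{\beta,\nu}(u)=u+\beta^2 g_\nu(u)$ (your $f$), with the AT condition emerging exactly as $H'_{\beta,\nu}(0)\ge 0$ at the point corresponding to your $w_0$. One simplification you can make: by Lemma~\ref{l:LLN} the diagonal law $\nu_k$ already equals $\nu$ for every fixed $k\ge 2$, so the $k\to\infty$ step and continuity of $\boxplus$ are not needed.
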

For the proof of Theorem~\ref{t:H0}, we use in Section~\ref{s:Cond} the explicit control of the weak independence between the disorder $(\bar g_{ij})$ and the approximate magnetizations $(m^{(k)}_i)$, which is given by Bolthausen's algorithm, and we conclude in Section~\ref{s:p-H0} using results from free probability which we recall in Section~\ref{s:Wigner}. \bch We remark that Theorem~\ref{t:H0} and its proof also pass through for magnetizations $(m_i)$ that are assumed to be independent (or sufficiently weakly dependent of $(\bar g_{ij})$: in other words, the correlation between $(\bar g_{ij})$ and $(m_i^{(k)})$ that is provided by Bolthausen's algorithm does not play a role at this level of accuracy. \ech
Theorem~\ref{t:H0} ensures that under the AT condition, no positive proportion of the eigenvalues of $\bs H^{(k)}$ becomes positive, in this sense, $\bs H^{(k)}$ does not lose concavity ``on a macroscopic scale''.
If and only if the AT condition holds with {\it strict} inequality, the right edge of the support of the weak limit of the spectrum is strictly smaller than zero, ensuring that {\it strict} concavity is not lost ``on a macroscopic scale''. \ch However, we are unable to show that outlier eigenvalues, which are too few to have positive mass and thus are not visible in the weak limit, do not lead to a loss of concavity ``on a microscopic scale'' for large $N$, $k$. Instead, we prove a rigorous interpretation of Plefka's second condition in Theorem~\ref{Plefka2} below. We remark that the TAP free energy for a Sherrington-Kirkpatrick model with ferromagnetically biased coupling is analysed in~\cite{CFM23,FMM21}. In particular, strong convexity of the corresponding TAP functional for sufficiently strong coupling is shown in~\cite{CFM23,FMM21} using free probability and the Kac-Rice formula. After the prepublication of this article, the preprint of Gayrard~\cite{Gayrard} appeared where the strict concavity of the TAP free energy in the SK model is shown for a region of the $\bfm$'s which comprises the Bolthausen approximations, and for $(\beta,h)$ in a region that does not correspond to the AT condition. In such a region of $(\beta,h)$, Gayrard~\cite{Gayrard} then characterizes the limiting free energy by a variational principle.

Besides condition $P^1_N$, which is related to the weak limit of the spectrum, Plefka~\cite{plefka} states a second condition
\begin{equation}\label{Plefka2}
	P^{2}_N := \{\bs m \in [-1,1]^N,\frac{2\beta^2}{N}\sum_{i=1}^{N} (m_i^2-m_i^4) < 1\}
\end{equation}
which he relates to the loss of concavity on a ``microscopic'' scale. However, Owen~\cite{owen} argues that Plefka's second condition is not necessary and comes from an incorrect assumption, namely that the disorder $(\bar g_{ij})$ and the magnetizations $(m_i)$ were independent. Other than for the weak limit of the spectrum in Theorem~\ref{t:H0}, we expect that weak dependence (as present in Bolthausen's ``approximate solutions'') between $(\bar g_{ij})$ and $(m_i)$ does change the condition for the limiting largest eigenvalue to be positive. To illustrate the role of Plefka's second condition, we consider in the following theorem magnetizations $\bs m_N$  that differ from Bolthausen's approximate solutions as they are assumed to be independent of the disorder $\bar{\bs g}$. In this setting, we show rigorously that Plefka's second condition is equivalent to all outlier eigenvalues of the Hessian to be negative. For $\epsilon>0$, we denote by
\begin{equation}\label{e:p2eps}
	\bar P^{2,\epsilon}_N :=	\{\bs m \in [-1,1]^N,\frac{2\beta^2}{N}\sum_{i=1}^{N} (m_i^2-m_i^4) > 1+\epsilon\}
\end{equation}
sets of magnetizations that do not satisfy Plefka's second condition.
\begin{teor}\label{t:pos}
	Let $\beta>0$, $h\ne 0$ such that the AT condition~\eqref{e:AT} is satisfied with strict inequality. Let $\bs m_N$ be $[0,1]^N$-valued random vectors that are independent of $\bs{\bar g}$ and satisfy $\frac1N\sum_{i=1}^N \delta_{m_{N,i}}\overset{w}\to \mathcal L(\tanh(h+\beta\sqrt{q} Z))$ as $N\to\infty$, where $Z$ is a standard Gaussian random variable.
	\begin{enumerate}
		\item[(i)]
	If $\bs m_N$ takes values only in $P_N^1\cap P_N^2$, then
	\begin{equation}
		\lim_{N\to\infty}\bbP\left(\lambda_1\left(\bs H\left(\bs m_N\right)\right)\le 0\right)=1.
	\end{equation}
	\item[(ii)] Conversely, if $\bs m_N$ takes values only in $P_N^1\cap \bar P_N^{2,\epsilon}$ for some $\epsilon>0$, then there exists $\epsilon'>0$ such that
	\begin{equation}
		\lim_{N\to\infty}\bbP\left(\lambda_1\left(\bs H\left(\bs m_N\right)\right)\ge \epsilon' \right)=1.
	\end{equation}
	\end{enumerate}
\end{teor}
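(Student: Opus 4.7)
The plan is to decompose the Hessian as a rank-one perturbation of a deformed GOE matrix, and identify Plefka's second condition through the resulting BBP-type secular equation. Differentiating~\eqref{e:FEN} twice gives
\[\bs H(\bs m) = T(\bs m) + \tfrac{2\beta^2}{N}\bs m\bs m^\top,\qquad T(\bs m) := \bs W + \bs A(\bs m),\]
where $\bs W$ is the symmetric matrix with entries $(\beta/\sqrt N)\bar g_{ij}$ off the diagonal and $0$ on the diagonal, and $\bs A(\bs m)$ is diagonal with $A_{ii}=-\beta^2\bigl(1-\tfrac1N\sum_k m_k^2\bigr)-\tfrac{1}{1-m_i^2}$. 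Since $\bs m_N$ is independent of $\bar{\bs g}$, conditionally on $\bs m_N$ the matrix $T(\bs m_N)$ is a deformed GOE matrix; the arguments in the proof of Theorem~\ref{t:H0} apply unchanged (as noted right after that theorem) to give weak convergence of its empirical spectrum to a deterministic measure with right edge $t^*<0$ strictly. Our GOE extension of~\cite{CDFF} established in Section~\ref{s:Wigner} rules out any outliers of $T(\bs m_N)$, so $\lambda_1(T(\bs m_N))\to t^*$ in probability.

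By the secular equation for rank-one perturbations, any eigenvalue $\lambda$ of $\bs H(\bs m_N)$ strictly above $\lambda_1(T(\bs m_N))$ satisfies
\[f_N(\lambda) := 1 + \tfrac{2\beta^2}{N}\bs m_N^\top \bigl(T(\bs m_N)-\lambda I\bigr)^{-1}\bs m_N = 0,\]
and $f_N$ is strictly increasing from $-\infty$ to $1$ on $(\lambda_1(T),\infty)$. Hence the sign of $f_N(0)$ determines whether the unique outlier of $\bs H(\bs m_N)$ lies above or below~$0$.

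The heart of the proof is the evaluation of $\tfrac1N\bs m_N^\top T^{-1}\bs m_N$. By the isotropic local law for deformed Wigner matrices (applied conditionally on $\bs m_N$), the diagonal resolvent satisfies $(T^{-1})_{ii}\approx(A_{ii}-\beta^2 g_N(0))^{-1}$, where $g_N(0)$ is the Stieltjes transform of $T$ at~$0$. Writing $u_N := \beta^2(1-\tfrac1N\sum_k m_k^2)+\beta^2 g_N(0)$, the self-consistent equation for $g_N(0)$ reduces to
\[u_N = \beta^2\bigl(1-\tfrac1N\sum_k m_k^2\bigr)-\tfrac{\beta^2}{N}\sum_{i=1}^N\frac{1-m_i^2}{1+u_N(1-m_i^2)},\]
of which $u_N=0$ is the relevant root (the principal branch above the spectrum), yielding $g_N(0)=-(1-\tfrac1N\sum_k m_k^2)$ and hence $(T^{-1})_{ii}\approx -(1-m_i^2)$. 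After controlling the off-diagonal contribution to $\bs m_N^\top T^{-1}\bs m_N$ (via the isotropic local law, or by direct concentration using $\bs m_N\perp\bar{\bs g}$), one arrives at the remarkably clean identity
\[f_N(0) = 1 - \tfrac{2\beta^2}{N}\sum_{i=1}^N m_i^2(1-m_i^2) + o_{\bbP}(1).\]

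Assertions (i) and (ii) now follow directly. Under~(i), $\bs m_N\in P_N^2$ keeps the sum on the right strictly below~$1$, so $f_N(0)\ge o_{\bbP}(1)$; combined with $\lambda_1(T)\le t^*/2<0$ with high probability, this gives $\bbP(\lambda_1(\bs H(\bs m_N))\le 0)\to 1$. Under~(ii), $\bs m_N\in\bar P_N^{2,\epsilon}$ forces $f_N(0)\le -\epsilon+o_{\bbP}(1)$, and since $f_N'(\lambda)\le 8\beta^2/{t^*}^2$ uniformly on $[\lambda_1(T),0]$ with high probability (from the resolvent bound $\|(T-\lambda I)^{-1}\|\le 2/|t^*|$), the unique zero $\lambda^*$ of $f_N$ satisfies $\lambda^*\ge \epsilon\,{t^*}^2/(8\beta^2) =:\epsilon'>0$. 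The main technical hurdle is establishing sufficient concentration of the bilinear form $\bs m_N^\top(T-zI)^{-1}\bs m_N$ at $z=0$, which lies close to the spectral edge of $T$ when~\eqref{e:AT} is nearly saturated: the strict separation $\lambda_1(T)\le t^*/2$ afforded by the CDFF-type edge result is exactly what keeps the isotropic local law and the self-consistent fixed-point argument for $u_N=0$ uniform in the relevant $z$-regime.
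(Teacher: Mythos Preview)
Your approach is correct in outline but takes a genuinely different route from the paper, and it leans on heavier machinery than is actually needed.

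The paper does \emph{not} separate out the rank-one term and run a BBP/secular-equation argument on the deformed Wigner matrix $T$. Instead it absorbs the rank-one piece $\tfrac{2\beta^2}{N}\bs m\bs m^\top$ into the deterministic matrix, writing $\bs H(\bs m)=\tfrac{\beta}{\sqrt N}\bar{\bs g}+\bs A_N-\beta^2(1-q)\bs 1+\eta\bs 1$ with $A_{N,ij}=-\delta_{ij}/(1-m_i^2)+\tfrac{2\beta^2}{N}m_im_j$. Sherman--Morrison then locates the single outlier $u_N$ of $\bs A_N$ via the scalar equation $\tfrac{2\beta^2}{N}\sum_i m_i^2/(u+\tfrac{1}{1-m_i^2})=1$, and Plefka's second condition is exactly the sign of the left-hand side at $u=0$. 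The CDFF-type Lemma~\ref{l:maxgen} is applied once, with $\bs A_N$ (diagonal plus rank one) as the deterministic perturbation, giving $\lambda_1\to H_{\beta,\nu}(u_\infty)$; the identity $H_{\beta,\nu}(0)=\beta^2(1-q)$ from Lemma~\ref{l:suppfc} then delivers the sign. No isotropic local law, no analysis of $\bs m^\top T^{-1}\bs m$, no self-consistent fixed-point argument is needed.

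Your route buys a pleasing interpretation (Plefka's condition appears as the sign of $f_N(0)$), but at the cost of invoking the isotropic local law for deformed Wigner matrices, which is substantially stronger than anything developed in the paper. Two further points: (a) your appeal to Lemma~\ref{l:maxgen} to show $\lambda_1(T)\to t^*$ is not quite licensed, since that lemma as stated assumes an outlier $\theta\in\R\setminus\supp\nu$; the edge-sticking result for deformed Wigner without spikes is standard but would need to be cited separately; (b) the assertion that $u_N=0$ is ``the relevant root'' of the self-consistent equation requires justification---it is correct (it is equivalent to $g_{\mu_\beta\boxplus\hat\nu}(0)=g_\nu(0)=1-q$, which follows from~\eqref{e:g-addconv} and~\eqref{e:H0}), but you should say why the other solutions of the fixed-point equation are discarded. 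The paper's ordering of the two perturbations (rank-one first, then Wigner) sidesteps both issues.
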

The proof of Theorem~\ref{t:pos} is given in Section~\ref{s:p-pos-ind} and relies on a generalization of the convergence result of Capitaine et al.\ \cite{CDFF} for the largest eigenvalue of perturbed complex Wigner matrices which we state in Lemma~\ref{l:maxgen} below.
\begin{rem}
	As we may use that $\frac1N\sum_{i=1}^N \delta_{m_{N,i}}\overset{w}\to \mathcal L(\tanh(h+\beta\sqrt{q} Z))$ in Theorem~\ref{t:pos}, it follows that $\bs m_N\in P^1_N$ for all sufficiently large $N$ if $\beta>0$, $h\ne 0$ satisfy the AT condition~\eqref{e:AT} with strict inequality. Analogously, it follows that $\bs m_N\in \bar P^{2,\epsilon}_N$ for all sufficiently large $N$ if $\beta>0$, $h\ne 0$ satisfy
	\begin{equation}\label{e:rem-P2}
		2\beta^2\EE\left(\tanh^2(h+\beta\sqrt{q}Z)- \tanh^4(h+\beta\sqrt{q}Z)\right) > 1+\epsilon.
	\end{equation}
	Similarly, we have $\bs m_N\in P^{2}_N$ for all sufficiently large $N$ if $\beta>0$, $h\ne 0$ are such that
	\begin{equation}\label{e:Plefka2}
		2\beta^2\EE\left(\tanh^2(h+\beta\sqrt{q}Z)- \tanh^4(h+\beta\sqrt{q}Z)\right) < 1.
	\end{equation}
	
		It can be seen numerically that the set of $(\beta,h)$ which satisfy both~\eqref{e:AT} and~\eqref{e:rem-P2} for some $\epsilon>0$ is non-empty.
	\begin{figure}[h]
		\begin{minipage}[c]{0.45\textwidth}
			\includegraphics[height=6cm, width = 8cm, trim={0cm 0cm 0 0},clip]{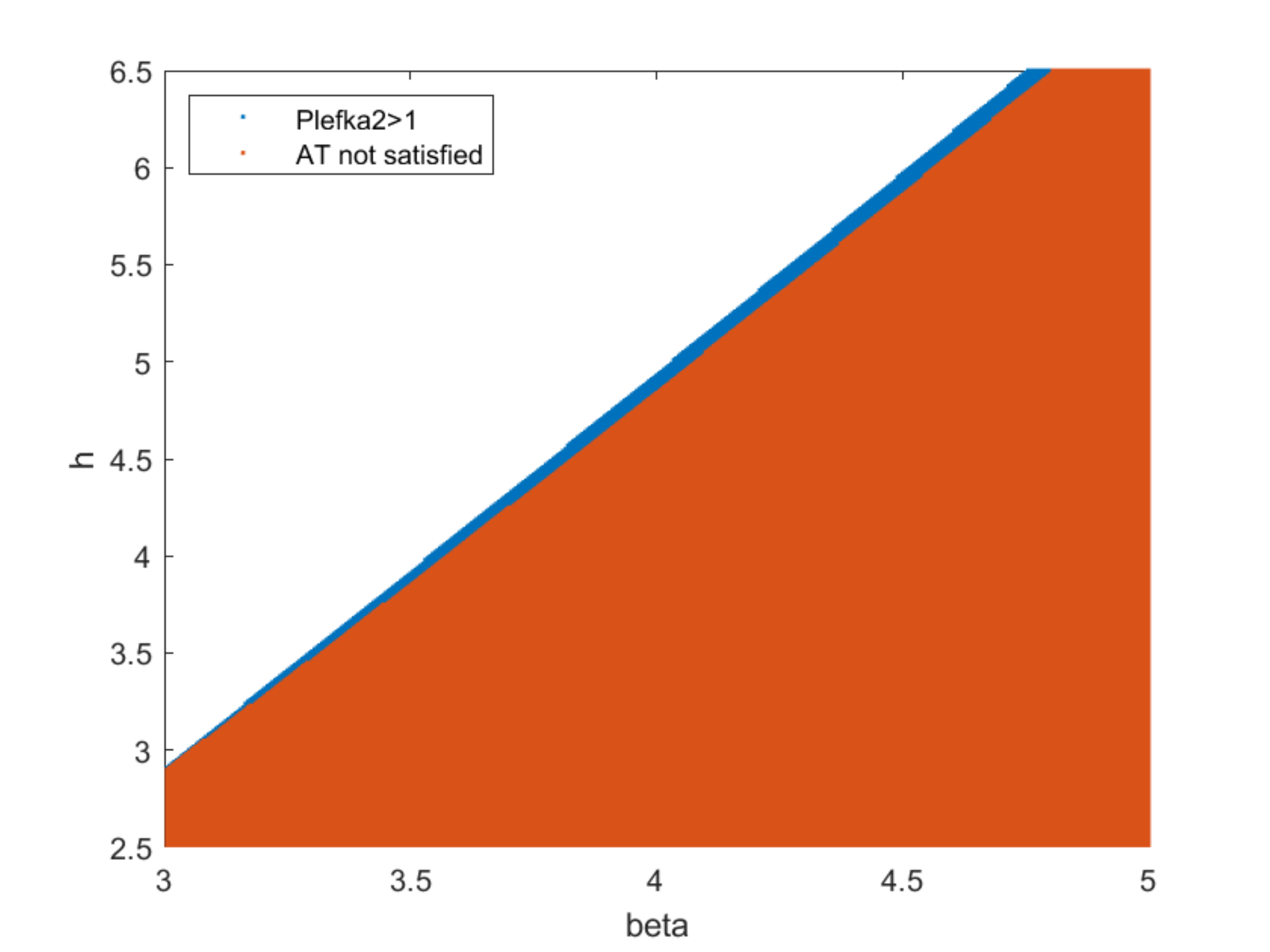}
		\end{minipage}
		\begin{minipage}[c]{0.55\textwidth}
			\caption{Phase diagram of inverse temperature $\beta>0$ and external field $h>0$. The region in which the AT condition~\eqref{e:AT} is not satisfied is depicted in red. The region in which~\eqref{e:AT} holds but~\eqref{e:Plefka2} does not hold is depicted in blue.
			} \label{figure1}
		\end{minipage}
	\end{figure}	
\end{rem}

\ech

\section{Bolthausen's iterative procedure} \label{Bolthausen}

We now recall the algorithm from~\cite{b2} which we will use throughout the paper. Also throughout the paper, we will assume that $\beta>0$ and $h\ne 0$.
A scalar product on $\R^N$ is given by $\langle \bs x,\bs y\rangle:=N^{-1}\sum_{i=1}^Nx_iy_i$ with associated norm $\|\bs x\|:=\sqrt{\langle \bs x, \bs x\rangle}$. Furthermore,
$\bs x\otimes \bs y:=N^{-1}(x_iy_j)_{ij}$, and for a matrix $\bs A\in\R^{N\times N}$, we denote its symmetrization by $\bar{\bs A}:=\tfrac{1}{\sqrt 2}(\bs A+ \bs A^T)$.

Let $\bfg =(g_{ij})_{i,j=1,\ldots,N}$ be an array of independent centered Gaussians with variance $1$. The interaction matrix will be its symmetrization $\bar\bfg$, normalized by $N^{-1/2}$.
Let $\psi:[0,q]\to[0,q]$ be defined by
\begin{equation}
	\psi(t)= \EE\tanh\left( h+ \beta \sqrt{t} Z + \beta\sqrt{q-t} Z' \right)\tanh\left( h+ \beta \sqrt{t} Z + \beta\sqrt{q-t} Z'' \right),
\end{equation}
where $Z,Z',Z''$ are independent standard Gaussians. Then set
\begin{equation}
	\gamma_1:=\EE\tanh\left( h+ \beta\sqrt{q} Z \right), \quad \rho_1:=\sqrt{q}\gamma_1
\end{equation}
and
\begin{equation}
	\rho_k:= \psi(\rho_{k-1}),\quad \gamma_k:=\frac{\rho_k-\sum_{j=1}^{k-1} \gamma_j^2}{\sqrt{q-\sum_{j=1}^{k-1}\gamma_j^2}}.
\end{equation}
Let $\bfg^{(1)}:=\bfg$, $\phi^{(1)}={\mathbf 1}$, $\bfm^{(1)}=\sqrt{q}{\mathbf 1}$. 	
With the shorthand $\Gamma_k^2:=\sum_{j=1}^k\gamma_j^2$, we set recursively for $k\in\N$
\begin{equation}
	\xi^{(k)}=\tfrac{1}{\sqrt N}\bfg^{(k)}\phi^{(k)},\quad \eta^{(k)}=\tfrac{1}{\sqrt N}{\bfg^{(k)}}^T\phi^{(k)},\quad \zeta^{(k)}=\frac{1}{\sqrt 2}\left(\xi^{(k)} + \eta^{(k)}\right),
\end{equation}
\begin{equation}\label{e:h-it}
	\bfh^{(k+1)}=h\mathbf{1}+\beta\sum_{s=1}^{k-1}\gamma_s\zeta^{(s)}+\beta\sqrt{q-\Gamma^2_{k-1}}\zeta^{(k)},
\end{equation}
\begin{equation}
\bfm^{(k+1)}=\tanh(\bfh^{(k+1)}),
\end{equation}
moreover $\{\phi^{(1)},\ldots,\phi^{(k+1)}\}$ as the Gram-Schmidt orthonormalization of $\{\bfm^{(1)},\ldots,\bfm^{(k+1)}\}$,
\begin{equation}\label{e:def-phi}
	\phi^{(k+1)}=\frac{ \bfm^{(k+1)} - \sum_{s=1}^k \langle \phi^{(s)},\bfm^{(k+1)}\rangle\phi^{(s)}}{\big\|\bfm^{(k+1)} - \sum_{s=1}^k \langle \phi^{(s)},\bfm^{(k+1)} \rangle\phi^{(s)}\big\|},
\end{equation}
and the modifications of the interaction matrices
\begin{equation}\label{e:def-gk}
	\bfg^{(k+1)}=\bfg^{(k)} - \sqrt{N}\rho^{(k)} ,
\end{equation}
where
\begin{equation}\label{e:def-rho}
	\rho^{(k)}=\xi^{(k)}\otimes\phi^{(k)}+\phi^{(k)}\otimes\eta^{(k)}-\big\langle\phi^{(k)},\xi^{(k)}\big\rangle\big(\phi^{(k)}\otimes\phi^{(k)}\big).
\end{equation}
By Lemma~2b of~\cite{b2}, we have
\begin{equation}\label{e:2bb}
	\sum_{s=1}^\infty\gamma_s^2=q.
\end{equation}
Noting that $\{\phi^{(s)}\}_{s\leq k}$ are orthonormal with respect to $\langle\cdot,\cdot \rangle$, we define
\begin{equation}\label{e:def-P}
	P^{(k)}_{ij} = \frac1N \sum_{s=1}^{k} \phi^{(s)}_i\phi^{(s)}_j,
\end{equation}
and one readily checks that $\boldsymbol{P}^{(k)}$ is an orthogonal projection.
Furthermore, let
\begin{equation}
	\mathcal G_k = \sigma\left(\xi^{(m)},\eta^{(m)}:\: m\le k\right).
\end{equation}
Then $\zeta^{(k)}$ is $\mathcal G_k$-measurable and $\bfm^{(k)}$ is $\mathcal G_{k-1}$-measurable.
Moreover, by Proposition~4 of~\cite{b2}, $\bfg^{(k)}$ is centered Gaussian under $\bbP\left(\cdot\mid \mathcal G_{k-1}\right)$ with covariances given by
\begin{equation}\label{e:gCov}
	V^{(k)}_{ij,st}:=\E\Big( g^{(k)}_{ij} g^{(k)}_{st}\,\Big|\, \mathcal G_{k-1} \Big) = Q^{(k-1)}_{is}Q^{(k-1)}_{jt},
\end{equation}
where $\boldsymbol{Q}^{(k)}=(Q^{(k)}_{ij})_{ij\le N}=\boldsymbol{1}-\boldsymbol{P}^{(k)}$. As we show in Lemma~\ref{l:Cov} below, this covariance matrix itself is a projection.\footnote{As we want this covariance to be a projection, we define the entries of $\bfg$ with unit variance, while~\cite{b2} defines them with variance $1/N$. As a consequence, we have to carry along the scaling factor $N^{-1/2}$ when $\bfg$ is used.}

If $X_N$, $Y_N$ are two sequences of random variables, we write
\begin{equation}
	X_N \simeq Y_N,
\end{equation}
if there exists a constant $C > 0$, depending possibly on other parameters, but not on $N$, with
\begin{equation}
	\bbP\big(|X_N-Y_N| >t\big)\leq C\rme^{-t^2N/C}.
\end{equation}
$X_N \simeq Y_N$ in particular implies $X_N - Y_N \to 0$ in $L_p(\bbP)$ for every $p>0$ as $N \to \infty$.
By Proposition~6 of~\cite{b2}, we have
\begin{equation}\label{e:mq}
\|\bfm^{(k)}\|\simeq q	
\end{equation}
for each $k\in\N$.

\ch We will also use the following lemma. We recall the definition of $\bs h^{(k)}$ from~\eqref{e:h-it}.
\begin{lem}[Law of large numbers, Lemma~14 of~\cite{b2}] \label{l:LLN}
	For any Lipschitz continuous function $f: \R\to\R$ with $|f(x)| \le C (1+|x|)$
for some constant $C<\infty$, and any $k\ge 2$, we have for $\beta>0$, $h\ne 0$ satisfying~\eqref{e:AT} that
\begin{equation}\label{e:LLN-f}
	\lim_{N\to\infty}\frac 1N\sum_{i=1}^N f\left(h^{(k)}_i\right)
	=\EE f(h+\beta\sqrt{q} Z),
\end{equation}
in $L_1(\bbP)$.
\end{lem}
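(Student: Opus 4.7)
The plan is to argue by induction on $k\ge 2$, leveraging the conditional Gaussian structure of $\bs g^{(k)}$ given $\mathcal G_{k-1}$ recorded in~\eqref{e:gCov}. For the base case $k=2$, one has $\bs h^{(2)} = h\bs 1 + \beta\sqrt q\,\zeta^{(1)}$ with $\zeta^{(1)}_i = (2N)^{-1/2}\sum_j (g_{ij} + g_{ji})$. A direct computation gives $\mathbb E[\zeta^{(1)}_i\zeta^{(1)}_l] = \delta_{il} + N^{-1}$, so the array $(h^{(2)}_i)_{i\le N}$ is asymptotically i.i.d.\ $N(h,\beta^2 q)$, and the convergence $N^{-1}\sum_i f(h^{(2)}_i)\to \mathbb E f(h+\beta\sqrt q Z)$ in $L_1(\bbP)$ follows from Gaussian concentration of the Lipschitz functional $\bs g\mapsto N^{-1}\sum_i f(h^{(2)}_i)$ around its mean (applicable thanks to the linear growth bound on $f$), combined with a routine mean computation.

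For the inductive step, I would condition on $\mathcal G_{k-1}$ and decompose
\begin{equation*}
\bs h^{(k+1)} = \Bigl(h\bs 1 + \beta\sum_{s=1}^{k-1}\gamma_s\zeta^{(s)}\Bigr) + \beta\sqrt{q-\Gamma^2_{k-1}}\,\zeta^{(k)},
\end{equation*}
the first summand being $\mathcal G_{k-1}$-measurable. By~\eqref{e:gCov}, $\bs g^{(k)}$ is conditionally centred Gaussian with covariance $V^{(k)}_{ij,st}=Q^{(k-1)}_{is}Q^{(k-1)}_{jt}$; using $Q^{(k-1)}\phi^{(k)}=\phi^{(k)}$ (orthogonality of the $\phi^{(s)}$), one computes
\begin{equation*}
\mathbb E\bigl[\zeta^{(k)}_i\zeta^{(k)}_l\,\big|\,\mathcal G_{k-1}\bigr] = \delta_{il} - \tfrac1N\sum_{s=1}^{k-1}\phi^{(s)}_i\phi^{(s)}_l + \tfrac1N\phi^{(k)}_i\phi^{(k)}_l,
\end{equation*}
which is a rank-$O(k)$ perturbation of the identity, so conditionally $\zeta^{(k)}$ is a Gaussian vector with asymptotically unit variances and vanishing cross-correlations. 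Noting the trivial identity $\sum_{s=1}^{k-1}\gamma_s^2 + (q-\Gamma^2_{k-1}) = q$, the array $(h^{(k+1)}_i)_{i\le N}$ is then asymptotically i.i.d.\ $N(h,\beta^2 q)$. To conclude, I would apply Gaussian (Poincar\'e) concentration to $\zeta^{(k)}\mapsto N^{-1}\sum_i f(h^{(k+1)}_i)$ conditionally on $\mathcal G_{k-1}$, reducing the claim to the convergence of the conditional expectation $N^{-1}\sum_i \mathbb E[f(h^{(k+1)}_i)\mid\mathcal G_{k-1}]$; this is a Lipschitz function of $(\zeta^{(s)})_{s<k}$, whose limit is identified via a joint version of the inductive hypothesis (applied to the lower-level iterates expressed as Lipschitz combinations of the $\zeta^{(s)}$'s).

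The main obstacle is handling the rank-$O(k)$ correction to the identity in the conditional covariance of $\zeta^{(k)}$: each $\phi^{(s)}_i$ may be of order one, so the correction is not uniformly negligible entrywise. The way out is that the rank remains a fixed constant in $N$, so the correction perturbs averages of the form $N^{-1}\sum_i f(h^{(k+1)}_i)$ by at most $O(k/N)$, which vanishes as $N\to\infty$ for fixed $k$. This also explains why the statement is restricted to fixed $k$: any further limit in $k$ would require quantitative control of how these rank-$k$ corrections accumulate, which is precisely the difficulty handled separately in Bolthausen's cavity analysis culminating in~\eqref{e:B-conv}.
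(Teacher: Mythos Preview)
The paper does not prove this lemma: it is quoted verbatim as Lemma~14 of~\cite{b2}, so there is no proof in the paper to compare against. Your outline is essentially the standard approach (and the one Bolthausen uses): condition on $\mathcal G_{k-1}$, exploit the explicit conditional covariance~\eqref{e:gCov} to see that $\zeta^{(k)}$ is Gaussian with covariance differing from the identity by a fixed-rank matrix, apply Gaussian concentration, and reduce to a lower-level empirical average.

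Two remarks on details. First, you correctly flag that the induction must be strengthened: the $\mathcal G_{k-1}$-measurable part $h+\beta\sum_{s<k}\gamma_s\zeta^{(s)}_i$ is \emph{not} $h^{(k)}_i$ (the coefficient of $\zeta^{(k-1)}_i$ differs), so the inductive hypothesis has to be the joint statement that $N^{-1}\sum_i G(\zeta^{(1)}_i,\ldots,\zeta^{(k-1)}_i)\to\EE G(Z_1,\ldots,Z_{k-1})$ for Lipschitz $G$, with $Z_1,\ldots,Z_{k-1}$ i.i.d.\ standard Gaussians. This is indeed how Bolthausen states and proves the result; the lemma as quoted here is a specialisation. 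Second, your $O(k/N)$ for the rank correction is slightly off: coupling $\zeta^{(k)}$ to a standard Gaussian $\tilde\zeta$ via $\zeta^{(k)}=C^{1/2}\tilde\zeta$ gives $\|\zeta^{(k)}-\tilde\zeta\|_{\ell_2}=O_P(\sqrt k)$ (since $C^{1/2}-I$ has rank $\le k$ and eigenvalues in $[-1,\sqrt2-1]$), hence the perturbation of the average is $O_P(\sqrt{k/N})$ by Cauchy--Schwarz. This still vanishes for fixed $k$, so the argument goes through.
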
\ech

\section{Replica symmetric formula for the TAP free energy}
\label{s:TAP-FE}

To prove Theorem \ref{t:FEN0}, we will use the following lemma. \bch Here we recall that  $\bar{\bs g}:=\tfrac{1}{\sqrt 2}(\bs g+ \bs g^T)$ where $\bs g\in\R^{N\times N}$ is a matrix whose entries $(g_{ij})_{i,j=1,\ldots,N}$ are independent standard Gaussians, and $\bs m^{(k)}$ are the corresponding magnetizations provided by Bolthausen's algorithm from Section~\ref{Bolthausen}.\ech
\begin{lem}\label{l:b1b2}
	Let
	\begin{equation}\label{e:def-Delta}
		\Delta^{(k)}=\tanh^{-1}(\bfm^{(k)}) - h\mathbf{1} - \frac{\beta}{\sqrt N}\bar \bfg \bfm^{(k)} + \beta^2(1-q) \bfm^{(k)},
	\end{equation}
	and assume that $\beta>0$, $h\ne 0$ satisfies the AT condition~\eqref{e:AT}. Then,
	\begin{equation}
		\|\Delta^{(k)}\| \to 0\quad\text{in }L_2(\bbP)\quad\text{as } N\to\infty\text{ followed by } k\to\infty\,.
	\end{equation}
\end{lem}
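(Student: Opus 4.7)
The guiding principle is that Bolthausen's algorithm is engineered so that $\bs h^{(k)}:=\tanh^{-1}(\bs m^{(k)})$, which by~\eqref{e:h-it} equals $h\mathbf 1+\beta\sum_{s=1}^{k-2}\gamma_s\zeta^{(s)}+\beta\sqrt{q-\Gamma_{k-2}^2}\,\zeta^{(k-1)}$, approximately satisfies the \emph{two-step} TAP relation
\begin{equation*}
\bs h^{(k)}\simeq h\mathbf 1+\tfrac{\beta}{\sqrt N}\bar{\bs g}\,\bs m^{(k-1)}-\beta^2(1-q)\bs m^{(k-2)}\quad\text{in } L_2(\bbP),\ N\to\infty.
\end{equation*}
Given this, the Cauchy property~\eqref{e:B-conv} lets me replace $\bs m^{(k-1)},\bs m^{(k-2)}$ by $\bs m^{(k)}$ in the double limit $N\to\infty$, then $k\to\infty$, yielding $\|\Delta^{(k)}\|\to 0$.

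For the two-step identity, iterating~\eqref{e:def-gk} gives $\bar{\bs g}=\bar{\bs g}^{(k)}+\sqrt N\sum_{s=1}^{k-1}\bar\rho^{(s)}$ with $\bar\rho^{(s)}=\zeta^{(s)}\otimes\phi^{(s)}+\phi^{(s)}\otimes\zeta^{(s)}-\langle\phi^{(s)},\zeta^{(s)}\rangle\phi^{(s)}\otimes\phi^{(s)}$. A short induction on~\eqref{e:def-gk} shows $\bar{\bs g}^{(k)}\phi^{(s)}=0$ for $s<k$, so the $\bar{\bs g}^{(k)}$ part contributes nothing to $\bar{\bs g}\bs m^{(k-1)}$. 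Expanding $\bs m^{(k-1)}=\sum_{s=1}^{k-1}\alpha^{(k-1)}_s\phi^{(s)}$ with $\alpha^{(k-1)}_s:=\langle\phi^{(s)},\bs m^{(k-1)}\rangle$, and using $\bar\rho^{(s)}\phi^{(s)}=\zeta^{(s)}$ together with $\bar\rho^{(s)}\phi^{(t)}=\langle\zeta^{(s)},\phi^{(t)}\rangle\phi^{(s)}$ for $t\neq s$, I obtain
\begin{equation*}
\tfrac{\beta}{\sqrt N}\bar{\bs g}\,\bs m^{(k-1)}=\beta\sum_{s=1}^{k-1}\alpha^{(k-1)}_s\zeta^{(s)}+\beta\sum_{s=1}^{k-1}\bigl(\langle\zeta^{(s)},\bs m^{(k-1)}\rangle-\alpha^{(k-1)}_s\langle\phi^{(s)},\zeta^{(s)}\rangle\bigr)\phi^{(s)}.
\end{equation*}
Lemma~\ref{l:LLN} combined with Bolthausen's conditional Gaussian framework yields $\alpha^{(k-1)}_s\simeq\gamma_s$ for $s\le k-2$ and $\alpha^{(k-1)}_{k-1}\simeq\sqrt{q-\Gamma^2_{k-2}}$, making the first sum match $\bs h^{(k)}-h\mathbf 1$. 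In the second sum, $\langle\phi^{(s)},\zeta^{(s)}\rangle$ is a centered bilinear Gaussian form of order $N^{-1/2}$ and is therefore $\simeq 0$. For $\langle\zeta^{(s)},\bs m^{(k-1)}\rangle=\langle\zeta^{(s)},\tanh(\bs h^{(k-1)})\rangle$ with $s\le k-2$, Gaussian integration by parts within Bolthausen's conditioning (noting that $\zeta^{(s)}$ enters $\bs h^{(k-1)}$ linearly with coefficient $\beta\gamma_s$ for $s\le k-3$ and $\beta\sqrt{q-\Gamma^2_{k-3}}$ for $s=k-2$), combined with $\tanh'=1-\tanh^2$ and $\|\bs m^{(s)}\|^2\simeq q$ from~\eqref{e:mq}, yields $\langle\zeta^{(s)},\bs m^{(k-1)}\rangle\simeq\beta\alpha^{(k-2)}_s(1-q)$; the $s=k-1$ term is $\simeq 0$ since $\zeta^{(k-1)}$ has zero conditional mean given $\mathcal G_{k-2}$ while $\bs m^{(k-1)}$ is $\mathcal G_{k-2}$-measurable. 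Summing reassembles exactly $\beta^2(1-q)\bs m^{(k-2)}$.

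To close, write
\begin{equation*}
\Delta^{(k)}=\bigl(\bs h^{(k)}-h\mathbf 1-\tfrac{\beta}{\sqrt N}\bar{\bs g}\bs m^{(k-1)}+\beta^2(1-q)\bs m^{(k-2)}\bigr)+\tfrac{\beta}{\sqrt N}\bar{\bs g}\bigl(\bs m^{(k-1)}-\bs m^{(k)}\bigr)+\beta^2(1-q)\bigl(\bs m^{(k)}-\bs m^{(k-2)}\bigr).
\end{equation*}
The first bracket is $L_2$-small by the step above. The standard GOE bound $\E\|\bar{\bs g}/\sqrt N\|_{\mathrm{op}}^4\le C$ and Cauchy--Schwarz give
\begin{equation*}
\E\bigl\|\tfrac{\beta}{\sqrt N}\bar{\bs g}(\bs m^{(k-1)}-\bs m^{(k)})\bigr\|^2\le\beta^2 C^{1/2}\bigl(\E\|\bs m^{(k-1)}-\bs m^{(k)}\|^4\bigr)^{1/2},
\end{equation*}
with the last factor vanishing in the required double limit because $\|\bs m^{(k)}-\bs m^{(k-1)}\|\le 2$ pointwise (so $\|\cdot\|^4\le 4\|\cdot\|^2$) and $\E\|\bs m^{(k-1)}-\bs m^{(k)}\|^2\to 0$ by~\eqref{e:B-conv}. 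The $\bs m^{(k)}-\bs m^{(k-2)}$ term is treated identically. This gives $\|\Delta^{(k)}\|\to 0$ in $L_2(\bbP)$ as $N\to\infty$ followed by $k\to\infty$.

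The main obstacle is the Onsager identification $\sum_{s=1}^{k-1}\langle\zeta^{(s)},\bs m^{(k-1)}\rangle\phi^{(s)}\simeq\beta(1-q)\bs m^{(k-2)}$. Although morally a one-line Stein computation, its rigorous execution must track that the IBP-generated coefficients $\alpha^{(k-2)}_s$ reassemble \emph{precisely} $\bs m^{(k-2)}$ (and not $\bs m^{(k-1)}$ or some other linear combination); this rests on how Bolthausen's recursion $\rho_k=\psi(\rho_{k-1})$ and the Gram--Schmidt normalizations from~\eqref{e:def-phi} interlock, and is the reason one must work inside Bolthausen's specific conditioning scheme rather than merely treating $(\bar{\bs g},\bs m^{(k)})$ as weakly correlated.
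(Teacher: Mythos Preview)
Your argument is essentially correct, but it follows a different path from the paper's proof. The paper works \emph{directly} with $\tfrac{1}{\sqrt N}\bar{\bs g}\bs m^{(k)}$: it expands via $\bar{\bs g}=\bar{\bs g}^{(k)}+\sqrt{N/2}\sum_{s=1}^{k-1}(\rho^{(s)}+{\rho^{(s)}}^T)$, uses Proposition~6 and Lemmas~13,~16 of~\cite{b2} to replace the inner products $\langle\phi^{(s)},\bs m^{(k)}\rangle$ and $\langle\zeta^{(s)},\bs m^{(k)}\rangle$ by their deterministic limits, and after cancellations obtains $\beta^{-1}\Delta^{(k)}\stackrel{N,k}{\sim}-\gamma_{k-1}\zeta^{(k-1)}+\beta(1-q)\bigl(\bs m^{(k)}-\sum_{s\le k-2}\gamma_s\phi^{(s)}-\sqrt{q-\Gamma_{k-2}^2}\,\phi^{(k-1)}\bigr)$. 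Both residual terms are then shown to vanish using~\eqref{e:2bb} and $\|\zeta^{(k-1)}\|\simeq 1$; the Cauchy property~\eqref{e:B-conv} is never invoked. By contrast, you first establish the \emph{two-step} relation $\bs h^{(k)}\simeq h\mathbf 1+\tfrac{\beta}{\sqrt N}\bar{\bs g}\bs m^{(k-1)}-\beta^2(1-q)\bs m^{(k-2)}$ and then close the gap to $\Delta^{(k)}$ via~\eqref{e:B-conv} together with the GOE operator-norm bound $\E\|\bar{\bs g}/\sqrt N\|_{\mathrm{op}}^4\le C$. What this buys you is that $\bar{\bs g}^{(k)}\bs m^{(k-1)}=0$ \emph{exactly} (since $\bs m^{(k-1)}\in\mathrm{span}\{\phi^{(1)},\ldots,\phi^{(k-1)}\}$), whereas the paper must absorb the term $\tfrac{1}{\sqrt N}\bar{\bs g}^{(k)}\bs m^{(k)}$ through Lemma~13 of~\cite{b2}. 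The price is the extra machinery of the Cauchy property and the operator-norm moment bound.

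Two small points on your write-up. First, the identifications $\alpha^{(k-1)}_s\simeq\gamma_s$ and $\alpha^{(k-1)}_{k-1}\simeq\sqrt{q-\Gamma_{k-2}^2}$ are Proposition~6 of~\cite{b2}, not Lemma~\ref{l:LLN}. Second, your justification ``$\zeta^{(k-1)}$ has zero conditional mean given $\mathcal G_{k-2}$'' is not by itself enough to conclude $\langle\zeta^{(k-1)},\bs m^{(k-1)}\rangle\simeq 0$; you need concentration, which is the content of Lemma~13 of~\cite{b2}. Similarly, your IBP computation for $\langle\zeta^{(s)},\bs m^{(k-1)}\rangle$ is really Lemma~16 of~\cite{b2} applied with $k$ replaced by $k-1$ (yielding the deterministic $\beta\gamma_s(1-q)$ for $s\le k-3$ and $\beta\sqrt{q-\Gamma_{k-3}^2}(1-q)$ for $s=k-2$); the passage to $\beta\alpha^{(k-2)}_s(1-q)$ then goes through Proposition~6 again, which is the clean way to make your ``reassembles precisely $\bs m^{(k-2)}$'' step rigorous.
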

\begin{proof}
		Let us write $X\stackrel{N,k}{\sim} Y$ if $\|X-Y\|\to 0$ in $L_2(\bbP)$ as $N\to\infty$ followed by $k\to\infty$, and $\stackrel{N}{\sim}$ if the norm vanishes already as $N\to\infty$. From~\eqref{e:h-it}, we have
		\begin{equation}\label{e:p-lrec-1}
			\beta^{-1}\Delta^{(k)}=\sum_{s=1}^{k-2}\gamma_s\zeta^{(s)} + \sqrt{q-\Gamma_{k-2}^2}\zeta^{(k)} - \frac{1}{\sqrt N} \bar \bfg\bfm^{(k)} + \beta(1-q)\bfm^{(k)}\,.
		\end{equation}
		We note that the second term on the right-hand side of~\eqref{e:p-lrec-1} is $\stackrel{N,k}{\sim}0$ by Lemmas~2 and~15a of~\cite{b2}. Thus it holds
		\begin{equation}\label{e:p-lrec-1-bonus}
			\beta^{-1}\Delta^{(k)}\stackrel{N,k}{\sim}\sum_{s=1}^{k-2}\gamma_s\zeta^{(s)} - \frac{1}{\sqrt N} \bar \bfg\bfm^{(k)} + \beta(1-q)\bfm^{(k)}\,.
		\end{equation}
		We will now rewrite the second term of \eqref{e:p-lrec-1-bonus}. From~\eqref{e:def-rho}, we obtain
		\begin{equation}\label{e:ggk}
			\bar \bfg = \bar \bfg^{(k)} + \sqrt{\tfrac{N}{2}}\sum_{s=1}^{k-1}\big(\rho^{(s)}+{\rho^{(s)}}^T\big),
		\end{equation}
		which we use together with~\eqref{e:def-rho} in
		\begin{equation}
			\tfrac{1}{\sqrt{N}}\bar \bfg\bfm^{(k)} = \tfrac{1}{\sqrt{N}}\bar \bfg^{(k)}\bfm^{(k)} + \sum_{s=1}^{k-1}  \Big[\zeta^{(s)}\langle \phi^{(s)},\bfm^{(k)}\rangle
			+ \phi^{(s)}\langle \zeta^{(s)}, \bfm^{(k)}\rangle
			- \sqrt2 \langle \phi^{(s)},\xi^{(s)}\rangle\phi^{(s)}\langle\phi^{(s)} ,\bfm^{(k)} \rangle\Big]\,.
		\end{equation}
		The expression on the right-hand side of the last display is
		\begin{equation}\label{e:lrec-2}
			\stackrel{N,k}{\sim}	 \sum_{s=1}^{k-1}  \gamma_s \zeta^{(s)}
			+  \sum_{s=1}^{k-2} \phi^{(s)}\beta\gamma_s(1-q)
			+ \phi^{(k-1)}\beta(1-q)\sqrt{q-\Gamma_{k-2}^2}
			-\sqrt2  \sum_{s=1}^{k-1}\langle \phi^{(s)},\xi^{(s)}\rangle\phi^{(s)}\gamma_s,
		\end{equation}
		by Proposition 6, Lemmas~13 and~16 of~\cite{b2}.
		As $\|\phi^{(s)}\|=1$, the last term in~\eqref{e:lrec-2} is $\stackrel{N}{\sim} 0$ by Lemma~11 of~\cite{b2}.
		
		Replacing $\tfrac{1}{\sqrt{N}}\bar\bfg\bfm^{(k)}$ in~\eqref{e:p-lrec-1-bonus} by~\eqref{e:lrec-2}, after cancellations we obtain 
		\begin{equation}\label{e:a6}
			\beta^{-1} \Delta^{(k)} \stackrel{N,k}{\sim} -\gamma_{k-1} \zeta^{(k-1)}+\beta(1-q)\bigg(\bfm^{(k)}
			-\sum_{s=1}^{k-2} \phi^{(s)}\gamma_s
			- \phi^{(k-1)}\sqrt{q-\Gamma_{k-2}^2}\bigg).
		\end{equation}
		By Lemmas~2 and~15a of~\cite{b2}, the first term on the r.h.s.\ of~\eqref{e:a6} vanishes and we obtain
		\begin{equation}
			\beta^{-1}\Delta^{(k)}\stackrel{N,k}{\sim}\beta(1-q)\bigg(\bfm^{(k)}
			-\sum_{s=1}^{k-2} \phi^{(s)}\gamma_s
			- \phi^{(k-1)}\sqrt{q-\Gamma_{k-2}^2}\bigg),
		\end{equation}
		where the $\|\cdot\|$ norm of the r.h.s.\ is bounded by
		\begin{equation}\label{e:a8}
			\beta(1-q) \left( \bigg\|\bfm^{(k)}
			-\sum_{s=1}^{k-2} \phi^{(s)}\gamma_s\bigg\|
			+\sqrt{q-\Gamma_{k-2}^2} \bigg\|\phi^{(k-1)}\bigg\|\right).
		\end{equation}
		By~\eqref{e:2bb}, we have that $\lim_{k\to\infty}\sqrt{q-\Gamma_{k-2}^2}=0$, recalling that $\|\phi^{(k-1)}\|=1$, the last term in the brackets on the r.h.s.\ of \eqref{e:a8} vanishes. As for the first term in the brackets, using the fact that $\{\phi^{(s)}\}$ is an orthonormal basis, it holds
		\beq
		\bigg\|\bfm^{(k)}	-\sum_{s=1}^{k-2} \phi^{(s)}\gamma_s\bigg\|^2=\bigg\|\bfm^{(k)}\bigg\|^2+\sum_{s=1}^{k-2} \gamma_s^2-2\sum_{s=1}^{k-2} \gamma_s \langle \bfm^{(k)},\bs \phi^{(s)}\rangle,
		\eeq
		By Proposition~6 of~\cite{b2} together with~\eqref{e:2bb} implies that the $\lim_{k\to\infty}\lim_{N\to\infty}$ of r.h.s.\ of the latter is equal to 0.			
\end{proof}
We are now ready to prove the convergence of the TAP functional to the replica-symmetric free energy:
\begin{proof}[Proof of Theorem~\ref{t:FEN0}]
	To see how this goes, we first reformulate \eqref{e:FEN} with the help of \eqref{e:tricks} with the right scaling,
	\begin{multline}\label{e:p-FEN-exp}
		N^{-1}\TAP_N(\bfm^{(k)})
		=\tfrac\beta2  N^{-3/2}\sum_{i\ne j}^N \bar g_{ij} m^{(k)}_i m^{(k)}_j + hN^{-1}\sum_{i=1}^N m^{(k)}_i
		- N^{-1}\sum_{i=1}^N \tanh^{-1}(m^{(k)}_i)m^{(k)}_i\\
		+ N^{-1}\sum_{i=1}^N \log\cosh\tanh^{-1}(m^{(k)}_i)
		+\frac{\beta^2}{4} \left( 1-\frac1N\sum_{i=1}^N {m^{(k)}_i}^2 \right)^2.
	\end{multline}
	By Lemma~\ref{l:LLN}, the terms in the second line of the latter converge in $L_1(\bbP)$ to the r.h.s.\ of~\eqref{e:t:FEN0} as $N\to\infty$. 
	
	It remains to show that the sum of the first three terms on the r.h.s.\ of~\eqref{e:p-FEN-exp} converges to $0$ in $L_1(\bbP)$ as $N\to\infty$, followed by $k\to\infty$. By Lemma~\ref{l:LLN}, first note that the limit in $L_1(\bbP)$ of the second and third term on the right-hand side of~\eqref{e:p-FEN-exp} is
	\beq\label{e:TAPFEintp}\bea
	\lim_{N \to \infty}  N^{-1}\sum_{i=1}^N \left(h-\tanh^{-1}(m^{(k)}_i)\right)m^{(k)}_i&=-\EE\left(\beta\sqrt{q} Z \tanh\left(h+\beta\sqrt{q} Z \right)\right)\\
	&=-\beta^2 q (1-q),
	\eea\eeq
	the last line by combining a simple integration by parts with \eqref{e:q}. It only remains to prove that the first term on the r.h.s.\ \eqref{e:p-FEN-exp} tends to $\beta^2 q (1-q)$. By Lemma~\ref{l:b1b2}, it holds
	\begin{equation}
		\frac{\beta}{\sqrt N} \sum_{j:\: j\ne i} \bar g_{ij} m^{(k)}_j = - h + \beta^2 (1-q) m^{(k)}_i + \tanh^{-1}(m^{(k)}_i) - \Delta_i^{(k)}-\frac{\beta}{\sqrt N} \bar g_{ii} m^{(k)}_i ,
	\end{equation}
	 Multiplying the latter by $\frac{m^{(k)}_i}{2N}$ and taking the sum over $i$ yield
	\begin{multline}\label{e:TAPFE-lin-p1-1}
		\tfrac12 \beta N^{-3/2}\sum_{i\ne j}^N \bar g_{ij} m^{(k)}_i m^{(k)}_j
		=-\tfrac12 hN^{-1}\sum_{i=1}^Nm^{(k)}_i +\tfrac12\beta^2(1-q)N^{-1}\sum_{i=1}^N{m^{(k)}_i}^2\\
		+\tfrac12 N^{-1}\sum_{i=1}^N m_i^{(k)}\tanh^{-1}(m_i^{(k)})-\tfrac12 N^{-1}\sum_{i=1}^N m_i^{(k)}\Delta^{(k)}_i -\frac{\beta}{2N\sqrt N} \sum_{i=1}^N \bar g_{ii} {m^{(k)}_i}^2.
	\end{multline}
	The last term on the r.h.s.\ of \eqref{e:TAPFE-lin-p1-1} tends to $0$ in $L_2(\bbP)$ as $N\to\infty$ as
	\begin{equation}\label{details}
		\E \frac1N \sum_{i=1}^N \left|\frac{\beta}{\sqrt{N}}\bar g_{ii}  m_i^{(k)}\right|^2 \leq
		\frac{\beta}{N^2}\sum_{i=1}^N \E \bar g_{ii}^2 = \frac{\beta}{N}.
	\end{equation}
	 Combining Cauchy-Schwarz with Lemma~\ref{l:b1b2} and~\eqref{e:mq}, we have that the second last term on the r.h.s.\ of \eqref{e:TAPFE-lin-p1-1} tends to $0$ in $L_2(\bbP)$ as $N\to\infty$ followed by $k\to\infty$.
	The sum of the remaining terms on the r.h.s.\ of \eqref{e:TAPFE-lin-p1-1} converges, as $N\to\infty$ in $L_1(\bbP)$ by Lemma~\ref{l:LLN} to
	\begin{multline}
		-\tfrac12 h \EE\left(\tanh\left(h+\beta\sqrt{q} Z \right)\right)
		+\tfrac12 \beta^2 (1-q)\EE\left(\tanh^2\left(h+\beta\sqrt{q} Z \right)\right)\\
		+\tfrac{1}{2}\EE\left(\tanh\left(h+\beta\sqrt{q} Z \right)\left(h+\beta\sqrt{q} Z \right)\right)=\beta^2q(1-q),
	\end{multline}
	again using~\eqref{e:TAPFEintp} and~\eqref{e:q}.
	All in all, we obtain that
	\begin{equation}
		\lim_{N\to\infty,\, k\to\infty}\tfrac12 \beta N^{-\frac{3}{2}}\sum_{i\ne j}^N \bar g_{ij} m^{(k)}_i m^{(k)}_j=\beta^2q(1-q)  , \text{in }  L_1(\bbP).
\end{equation}	
We proved that the first term on the r.h.s.\ of~\eqref{e:p-FEN-exp} tends to $\beta^2 q (1-q)$ and the assertion of Theorem~\ref{t:FEN0} follows.
\end{proof}

\section{Gaussian orthogonal ensemble}
\label{s:Wigner}
As a tool to study the Hessian of the TAP free energy functional, we record some known facts about the Gaussian orthogonal ensemble (GOE). A GOE with variance $\sigma^2>0$ is a real symmetric random matrix $\bs X$ with centered Gaussian entries of variance $\sigma^2$ off the diagonal, variance $2\sigma^2$ on the diagonal, and the entries $(X_{ij})_{1\le i\le j\le N}$ being independent.
The matrix $(\beta N^{-1/2} \bar g_{ij})_{i,j=1,\ldots, N}$ is a GOE with variance $\beta^2/N$. Thus, by Wigner's Theorem (see e.\,g.\ Theorem~2.1.1 in~\cite{AGZ}), its empirical spectral distribution converges weakly in probability to the semicircle law $\mu_\beta$ which is defined by its density
\begin{equation}\label{e:sc}
	\frac{\rmd \mu_\beta(x)}{\rmd x}=1_{[-2\beta,2\beta]}(x)\sqrt{4\beta^2-x^2}\,.
\end{equation}
Also, the largest eigenvalue $\lambda_1(\beta N^{-1/2} \bar{\bfg})$ converges a.\,s.\ to $2\beta$ (see e.\,g.~Theorem 1.13 of~\cite{tv10}).

For each real symmetric \bch (or Hermitian) \ech matrix $\bs M$ of size $n$, we denote the enumeration of its eigenvalues in non-increasing order by $\lambda_1(\bs M)\ge \ldots \ge \lambda_n(\bs M)$, and its empirical spectral distribution by
\begin{equation}
	\mu_{\bs M}:=\frac1N\sum_{i=1}^n \delta_{\lambda_i(\bs M)}.
\end{equation}
We recall that the Frobenius norm of a matrix $\bs M$ of size $n$ is defined by $\|\bs M\|_{\rm F}=(\sum_{i,j=1}^N |M_{ij}|^2)^{1/2}$.
The following standard result, for which we refer to Exercises~2.4.3 and~2.4.4 of~\cite{tao}, states that the limiting empirical spectral distributions of random matrices are invariant under additive perturbations in the prelimiting sequence that have either small rank or small Frobenius norm.
\begin{lem}\label{l:sp}
	Let $\bs M_n$ and $\bs N_n$ be random Hermitian matrices of size $n$ such that the empirical spectral distribution of $\bs M_n$ converges weakly a.\,s.\ to a probability measure~$\mu$. Suppose that at least one of the following conditions holds true:
	\begin{enumerate}[label=(\roman{*}),ref=(\roman{*})]
		\item\label{i:sp-F} $n^{-1} \|\bs N_n\|_{\rm F}^2\to 0$  a.\,s. ,
		\item\label{i:sp-r} $n^{-1}{\rm rank}(\bs N_n) \to 0$ a.\,s. .
	\end{enumerate}
	Then the empirical spectral distribution of $\bs M_n+\bs N_n$ converges to $\mu$ weakly a.\,s. .
\end{lem}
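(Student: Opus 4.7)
\medskip

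\noindent\textbf{Proof plan for Lemma~\ref{l:sp}.}

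The plan is to bound the distance between $\mu_{\bs M_n+\bs N_n}$ and $\mu_{\bs M_n}$ in each of the two regimes, and then conclude using the assumed weak convergence $\mu_{\bs M_n}\to\mu$.

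For part~\ref{i:sp-F}, I would test weak convergence against a bounded Lipschitz function $f:\R\to\R$ with Lipschitz constant $L$. The key input is the Hoffman--Wielandt inequality for Hermitian matrices: if the eigenvalues of two Hermitian matrices $\bs A,\bs B$ of size $n$ are each enumerated in non-increasing order, then
\begin{equation*}
\sum_{i=1}^n\bigl(\lambda_i(\bs A)-\lambda_i(\bs B)\bigr)^2\le\|\bs A-\bs B\|_{\rm F}^2.
\end{equation*}
Applied to $\bs A=\bs M_n+\bs N_n$ and $\bs B=\bs M_n$, together with Cauchy--Schwarz, this gives
\begin{equation*}
\Bigl|\int f\,\rmd\mu_{\bs M_n+\bs N_n}-\int f\,\rmd\mu_{\bs M_n}\Bigr|
\le\frac{L}{n}\sum_{i=1}^n|\lambda_i(\bs M_n+\bs N_n)-\lambda_i(\bs M_n)|
\le\frac{L}{\sqrt{n}}\,\|\bs N_n\|_{\rm F},
\end{equation*}
which tends to $0$ a.s.\ by the hypothesis. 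Since $\int f\,\rmd\mu_{\bs M_n}\to\int f\,\rmd\mu$ a.s.\ by assumption, the same holds for $\int f\,\rmd\mu_{\bs M_n+\bs N_n}$, and this proves weak a.s.\ convergence.

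For part~\ref{i:sp-r}, I would instead control the Kolmogorov distance between the two empirical spectral distributions via eigenvalue interlacing. Writing $r_n={\rm rank}(\bs N_n)$ and using Weyl's inequality for sums of Hermitian matrices with the fact that $\bs N_n$ has at most $r_n$ nonzero eigenvalues of each sign, one obtains
\begin{equation*}
\lambda_{i+r_n}(\bs M_n)\le\lambda_i(\bs M_n+\bs N_n)\le\lambda_{\max(i-r_n,1)}(\bs M_n),
\end{equation*}
for all $1\le i\le n$ (with the convention that $\lambda_j(\bs M_n)=-\infty$ for $j>n$). This shift-by-$r_n$ in the sorted eigenvalue sequence translates into the uniform bound
\begin{equation*}
\sup_{x\in\R}\bigl|F_{\mu_{\bs M_n+\bs N_n}}(x)-F_{\mu_{\bs M_n}}(x)\bigr|\le\frac{r_n}{n},
\end{equation*}
on the cumulative distribution functions. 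Since $r_n/n\to 0$ a.s.\ and $F_{\mu_{\bs M_n}}(x)\to F_\mu(x)$ a.s.\ at every continuity point of $F_\mu$ (by the assumed weak convergence), we conclude $F_{\mu_{\bs M_n+\bs N_n}}(x)\to F_\mu(x)$ a.s.\ at every such point, which is equivalent to the weak a.s.\ convergence $\mu_{\bs M_n+\bs N_n}\to\mu$.

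There is no real obstacle here: both parts are classical and rely only on standard matrix inequalities (Hoffman--Wielandt for the Frobenius perturbation, Weyl--type interlacing for the low-rank perturbation). The only mild subtlety is to make sure the a.s.\ hypotheses are used on a common probability-one event so that the conclusion is genuinely a.s., but this is immediate by intersecting the full-measure events on which the assumed convergences hold.
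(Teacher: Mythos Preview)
Your proposal is correct and matches exactly what the paper has in mind: the paper does not give its own proof but simply refers to Exercises~2.4.3 and~2.4.4 of Tao's book, which are precisely the Hoffman--Wielandt bound for the Frobenius perturbation and the rank--interlacing (Kolmogorov--distance) bound for the low-rank perturbation that you spell out.
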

Similarly, for the largest eigenvalue we have:
\begin{lem}\label{l:F-max}
	Let $\bs M_n$ and $\bs N_n$ be random Hermitian matrices of size $n$ such that the largest eigenvalue $\lambda_1(\bs M_n)$ of $\bs M_n$ converges a.\,s.\ to a limit $\lambda_1$ as $n\to\infty$. Suppose that $\|\bs N_n\|_{\rm F}^2\to 0$  a.\,s. Then also the largest eigenvalue
	$\lambda_1(\bs M_n + \bs N_n)$ of $\bs M_n + \bs N_n$ converges to the same limit $\hat\lambda_1$ almost surely.
\end{lem}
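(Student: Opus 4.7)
The plan is to reduce the perturbation bound to a standard Weyl-type inequality for Hermitian matrices, and then to use the elementary fact that the operator norm is dominated by the Frobenius norm.

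First, I would recall (or briefly verify from the min-max characterization of eigenvalues) that for Hermitian matrices $\bs A$ and $\bs B$ of the same size, Weyl's inequality gives
\begin{equation*}
\bigl|\lambda_1(\bs A+\bs B)-\lambda_1(\bs A)\bigr|\le \|\bs B\|_{\rm op},
\end{equation*}
where $\|\cdot\|_{\rm op}$ denotes the operator (spectral) norm. Applied to $\bs A=\bs M_n$ and $\bs B=\bs N_n$, this yields
\begin{equation*}
\bigl|\lambda_1(\bs M_n+\bs N_n)-\lambda_1(\bs M_n)\bigr|\le \|\bs N_n\|_{\rm op}.
\end{equation*}

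Next, I would use the standard comparison $\|\bs N_n\|_{\rm op}\le \|\bs N_n\|_{\rm F}$, which follows, for instance, from the fact that $\|\bs N_n\|_{\rm op}^2$ equals the largest eigenvalue of $\bs N_n^*\bs N_n$ while $\|\bs N_n\|_{\rm F}^2$ equals the sum of all its eigenvalues (which are non-negative). Combining with the hypothesis $\|\bs N_n\|_{\rm F}^2\to 0$ almost surely, we conclude $\|\bs N_n\|_{\rm op}\to 0$ almost surely.

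Putting the two ingredients together, on the almost sure event where both $\lambda_1(\bs M_n)\to \lambda_1$ and $\|\bs N_n\|_{\rm op}\to 0$, we get $\lambda_1(\bs M_n+\bs N_n)\to \lambda_1$. There is no substantial obstacle: the argument is completely general and uses only Weyl's inequality plus a norm comparison, with no use of structural properties of $\bs M_n$ or $\bs N_n$ beyond Hermiticity.
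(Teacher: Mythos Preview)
Your proof is correct and follows essentially the same approach as the paper: the paper applies Weyl's inequality and its dual to get $\lambda_1(\bs M_n)+\lambda_n(\bs N_n)\le \lambda_1(\bs M_n+\bs N_n)\le \lambda_1(\bs M_n)+\lambda_1(\bs N_n)$ and then bounds $|\lambda_1(\bs N_n)|,|\lambda_n(\bs N_n)|\le \|\bs N_n\|_{\rm F}$, which is exactly your argument packaged through the operator norm.
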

\begin{proof}
\bch By the Weyl inequality (see e.g.\ (1.54) in~\cite{tao}),
	\begin{equation}
		\lambda_1(\bs M_n + \bs N_n)\le \lambda_1(\bs M_n) + \lambda_1(\bs N_n).
	\end{equation}
	By the dual Weyl inequality (see e.g.\ Exercise~1.3.5 in~\cite{tao}),
	\begin{equation}
		\lambda_1(\bs M_n + \bs N_n)\ge \lambda_1(\bs M_n) + \lambda_n(\bs N_n).
	\end{equation}
The assertion follows as $\lambda_1(\bs N_n)\le \|\bs N_n\|_{\rm F}$ and $\lambda_n(\bs N_n)\ge - \|\bs N_n\|_{\rm F}$. \ech
\end{proof}
\subsection{Free convolution}
First we state a definition of the free convolution (see~\cite{Biane,VDN,NicaSpeicher}). The Stieltjes transform of a probability measure $\mu$ on $\R$ is defined by
\begin{equation}\label{e:def-Stieltjes}
	g_\mu(z)=\int\frac{\rmd \mu(x)}{z-x}
\end{equation}
which is analytic in $\C\setminus\supp\mu$. It can be shown \bch that the Stieltjes transform characterizes the measure $\mu$ uniquely, and \ech that there exists a domain $D$ on which $g_\mu$ is univalent. Denoting by $K_\mu$ the inverse function of $g_\mu$ defined on $g_\mu(D)$, the R-transform of $\mu$ is defined on $g_\mu(D)$ by
\begin{equation}
	R_\mu(z)=K_\mu(z)-\frac{1}{z}\,.
\end{equation}
Free probability theory shows that for probability measures $\mu,\lambda$ on $\R$, there exists a unique probability measure $\kappa$ with
\begin{equation}
	R_\kappa=R_\lambda+R_\mu
\end{equation}
on a domain on which these three functionals are defined. The measure $\kappa$ is denoted by $\lambda\boxplus\mu$ and called the free (additive) convolution of $\lambda$ and $\mu$.

The following result ensures that limiting spectral distribution of a sum of a GOE and a deterministic matrix whose spectral distribution weakly converges is given by a free additive convolution with the semicircle law. The support of this free convolution is analyzed in Lemma~\ref{l:suppfc} below.
\begin{lem}[]
	\label{l:addconv}
	For $n\in\N$, let $\bs X_n$ be a GOE with unit variance, and let $\bs A_n$ be a deterministic real and symmetric matrix, each of size $n$, such that the empirical spectral distribution $\mu_{\bs A_n}$ converges weakly to some probability measure $\nu$ on $\R$ as $n\to\infty$. Then, for each $\sigma>0$, the empirical spectral distribution of $\sigma n^{-1/2}\bs X_n+\bs A_n$ converges weakly almost surely to $\mu_\sigma\boxplus\nu$. \bch  The Stieltjes transform of $\mu_\sigma\boxplus\nu$ is the unique solution $g$ of the following functional equation:
\begin{equation}\label{e:g-addconv}
\forall z \, \in {\C}^{+},\quad g(z)=g_\nu(z-\sigma^2 g(z)).
\end{equation}\ech
\end{lem}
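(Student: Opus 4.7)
My plan is to combine Gaussian concentration of measure with the classical Stieltjes-transform fixed-point method, in the spirit of Pastur-Shcherbina and the treatment in Chapter~2 and Section~5.5 of Anderson-Guionnet-Zeitouni. Write $g_n(z)=\frac{1}{n}\tr(z\bs I-\sigma n^{-1/2}\bs X_n-\bs A_n)^{-1}$ for $z\in\C^+$; since $g_n$ characterises $\mu_{\sigma n^{-1/2}\bs X_n+\bs A_n}$ uniquely, the claim reduces to showing that $g_n(z)\to g(z)$ almost surely for every $z\in\C^+$, where $g$ is the unique $\C^-$-valued analytic solution of~\eqref{e:g-addconv}.

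For the almost sure reduction, observe that for fixed $z\in\C^+$ the map $(X_{ij})_{i\le j}\mapsto g_n(z)$ is Lipschitz with constant of order $(\sqrt{n}\,|\Im z|^2)^{-1}$, as a direct computation of the Fréchet derivative of the resolvent shows. Gaussian concentration (the Herbst argument applied to the independent Gaussian entries of $\bs X_n$) therefore gives $g_n(z)-\E g_n(z)\to 0$ almost surely at an exponential rate, and by a diagonal / Vitali argument it suffices to identify $\lim_{n\to\infty}\E g_n(z)$ on a sequence with an accumulation point in $\C^+$.

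The expected Stieltjes transform is then handled by Gaussian integration by parts. Denote $\bs G(z)=(z\bs I-\sigma n^{-1/2}\bs X_n-\bs A_n)^{-1}$ and use $\partial G_{ab}/\partial X_{ij}=-\sigma n^{-1/2}\,\tfrac12(G_{ai}G_{jb}+G_{aj}G_{ib})$ together with $\E[X_{ij}F(\bs X)]=\E[\partial_{X_{ij}}F(\bs X)]$ (with appropriate factor $2$ on the diagonal). Substituting into $\E\tr \bs G = \sum_a \E G_{aa}$ and using $z\bs G=\bs I+(\sigma n^{-1/2}\bs X_n+\bs A_n)\bs G$ yields, after grouping diagonal and off-diagonal contributions,
\begin{equation}\label{e:plan-fp}
\E g_n(z)=\E\!\left[\frac{1}{n}\tr(z-\sigma^2\E g_n(z))\bs I-\bs A_n)^{-1}\right]+\varepsilon_n(z),
\end{equation}
with $\varepsilon_n(z)=O(n^{-1}|\Im z|^{-k})$ coming from variance estimates of the form $\var(\tfrac1n\tr(\cdot))=O(n^{-2})$ (again by Gaussian concentration applied to the resolvent). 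The first term on the right of~\eqref{e:plan-fp} equals $g_{\mu_{\bs A_n}}(z-\sigma^2\E g_n(z))$, and since $\mu_{\bs A_n}\Rightarrow\nu$ the Stieltjes transforms converge locally uniformly on $\C^+$. Any subsequential limit $g$ of $\E g_n$ therefore satisfies $g(z)=g_\nu(z-\sigma^2 g(z))$.

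It remains to verify uniqueness of the solution and identify it with $g_{\mu_\sigma\boxplus\nu}$. Uniqueness follows because $g\mapsto g_\nu(z-\sigma^2 g)$ maps $\C^-$ into itself and, for $\Im z$ sufficiently large, is a strict contraction (as $|g_\nu'|\le|\Im(z-\sigma^2 g)|^{-2}$ is small there); by analytic continuation of Nevanlinna functions this extends to all of $\C^+$. For the identification, recall that the $R$-transform of the semicircle law $\mu_\sigma$ is $R_{\mu_\sigma}(\zeta)=\sigma^2\zeta$; by additivity of the $R$-transform, $K_{\mu_\sigma\boxplus\nu}(\zeta)=K_\nu(\zeta)+\sigma^2\zeta$, which upon setting $\zeta=g(z)$ and using $K=R+1/\zeta$ gives exactly $z=K_\nu(g(z))+\sigma^2 g(z)$, i.e.\ the fixed-point equation~\eqref{e:g-addconv}. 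The main technical step is the derivation of~\eqref{e:plan-fp} with a controlled error; the symmetrisation of the off-diagonal entries characteristic of the GOE must be tracked carefully in the integration-by-parts bookkeeping, but it only affects subleading $O(1/n)$ terms.
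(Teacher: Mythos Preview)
Your sketch is correct and follows the classical Pastur--Shcherbina route: Gaussian concentration to reduce to the mean, Gaussian integration by parts to derive the approximate fixed-point relation for $\E g_n$, then contraction/Nevanlinna arguments for uniqueness and the $R$-transform identity $R_{\mu_\sigma}(\zeta)=\sigma^2\zeta$ to identify the solution with $g_{\mu_\sigma\boxplus\nu}$. The minor issues are cosmetic (a misplaced parenthesis in~\eqref{e:plan-fp}, and you should state explicitly that $\{\E g_n\}$ is a normal family on $\C^+$ by the bound $|\E g_n(z)|\le|\Im z|^{-1}$ so that subsequential limits exist before you pass to them).

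The paper, by contrast, does not argue anything from scratch: it simply assembles citations. For the almost-sure weak convergence to $\mu_\sigma\boxplus\nu$ it invokes asymptotic freeness of the GOE from deterministic matrices (Theorem~5.4.5 in Anderson--Guionnet--Zeitouni); for the fact that the limiting Stieltjes transform satisfies~\eqref{e:g-addconv} it cites Lejay--Pastur; for uniqueness it cites Pastur; and for the identification of the fixed point with the Stieltjes transform of the free convolution it cites Proposition~2.1 of Capitaine--Donati-Martin--F\'eral--F\'evrier. So the underlying methods differ: the paper leans on the abstract asymptotic-freeness machinery, whereas you give a self-contained resolvent computation tailored to the Gaussian case. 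Your approach buys independence from the free-probability black box and makes the error terms explicit; the paper's approach buys brevity and immediately covers more general Wigner ensembles once the cited theorems are in hand.
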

\begin{proof}
	This is a standard result from free probability theory, see for example Theorem~5.4.5 in~\cite{AGZ}. \bch The functional equation~\eqref{e:g-addconv} is solved \ech by the Stieltjes transform of the limiting spectral distribution of $\sigma n^{-1/2}\bs X_n+\bs A_n$, \bch see e.g.\ Lejay and Pastur~\cite{pastur}, p.12. The functional equation~\eqref{e:g-addconv} has a unique solution, see Pastur~\cite{pastur1}, p.69. \ech We conclude with the fact that the Stieltjes transform of $\mu_\sigma\boxplus\nu$ \bch also solves the functional equation~\eqref{e:g-addconv}, \ech see e.g.\ Proposition~2.1 in~\cite{CDFF}.
\end{proof}
We will also use the following version of a result of Capitaine et al.\ \cite{CDFF} for the largest eigenvalue. For $\sigma>0$ and a probability measure $\nu$ on $\R$, let
\begin{equation}\label{e:def-Hz}
	H_{\sigma,\nu}(z):=z+\sigma^2 g_\nu(z)
\end{equation}
and
\begin{equation}\label{e:def-O}
	\mathcal O_{\sigma,\nu} := \{u\in \R\setminus \supp \nu\:: H_{\sigma,\nu}'(u)>0\}
\end{equation}
where $g_{\nu}$ denotes the Stieltjes transform defined as in~\eqref{e:def-Stieltjes}.
\begin{lem}[cf.\ \cite{CDFF}, Theorem 8.1]\label{l:maxgen}
	Let $\sigma>0$, let $\bs X_N$ be a GOE with unit variance, and let $\bs A_N$ be a deterministic real and symmetric matrix. Assume that the empirical spectral distribution $\mu_{\bs A_N}$ converges weakly to a probability measure $\nu$ on $\R$ as $N\to\infty$, and that there exists $s\in\R$ with $\nu(s,\infty)=0$.
	Also, suppose that there exist an integer $r\ge 2$ and $\theta\in\ch\R\setminus\supp\nu\ech$ with $\lim_{N\to\infty}\lambda_1(\bs A_N)=\theta$ and
	\begin{equation}\label{e:ass-bulk}
		\max_{j=r,\ldots,N}\rmd(\lambda_j(\bs A_N), \supp(\nu)) \stackrel{N\to\infty}{\longrightarrow} 0.
	\end{equation}Then \ch the following holds:
	\begin{enumerate}[label=(\roman{*}),ref=(\roman{*})]
		\item\label{i:outlier} If $\theta\in\mathcal O_{\sigma,\nu}$, then $\lim_{N\to\infty}\lambda_1(\sigma N^{-1/2}\bs X_N + \bs A_N)=H_{\sigma,\nu}(\theta)$ almost surely.
		\item\label{i:edge} If $\theta\in\R\setminus\mathcal O_{\sigma,\nu}$, then $\lim_{N\to\infty}\lambda_1(\sigma N^{-1/2}\bs X_N + \bs A_N)=\max\supp\nu$ almost surely.
	\end{enumerate}\ech
\end{lem}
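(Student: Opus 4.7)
The plan is to adapt the proof of Theorem~8.1 of~\cite{CDFF}, which handles the complex Hermitian case, to real symmetric (GOE) matrices, following the same four-step strategy.

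\textbf{Step 1 (reduction).} Decompose $\bs A_N = \bs B_N + \bs E_N$, where $\bs E_N$ is a symmetric rank-$(r-1)$ ``spike'' matrix supported on the span of the top $r-1$ eigenvectors $u_1^{(N)},\ldots,u_{r-1}^{(N)}$ of $\bs A_N$ and satisfying $\lambda_1(\bs E_N)\to\theta$, while $\bs B_N := \bs A_N - \bs E_N$ has no outliers and still satisfies $\mu_{\bs B_N}\to\nu$ weakly. The hypothesis~\eqref{e:ass-bulk} makes this possible. By Lemma~\ref{l:addconv} together with the classical no-outlier result for GOE deformed by a bulk-only symmetric matrix, one has
\[
\lambda_1\bigl(\sigma N^{-1/2}\bs X_N + \bs B_N\bigr) \longrightarrow \max\supp\bigl(\mu_\sigma\boxplus\nu\bigr) \quad \text{almost surely.}
\]

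\textbf{Step 2 (determinantal identity and isotropic limit).} For $z$ strictly above $\max\supp(\mu_\sigma\boxplus\nu)$, set $G_N(z) := (zI - \sigma N^{-1/2}\bs X_N - \bs B_N)^{-1}$, which exists with probability tending to $1$. The Weinstein--Aronszajn identity shows that $z$ is an eigenvalue of $\bs M_N := \sigma N^{-1/2}\bs X_N + \bs A_N$ iff
\[
\det\bigl(I_{r-1} - U_N^{T} G_N(z)\, U_N\, D_N\bigr) = 0,
\]
where $U_N := [u_1^{(N)},\ldots,u_{r-1}^{(N)}]$ and $D_N := \diag(\hat\theta_j^{(N)})$ collects the spike eigenvalues. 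The crucial ingredient is the isotropic estimate
\[
U_N^{T} G_N(z)\, U_N \;\longrightarrow\; \omega_{\sigma,\nu}(z)^{-1}\, I_{r-1} \quad \text{a.s.},
\]
uniformly on compacts of $\R\setminus\supp(\mu_\sigma\boxplus\nu)$, where $\omega_{\sigma,\nu}(z) := z - \sigma^2 g_{\mu_\sigma\boxplus\nu}(z)$ is the subordination function from~\eqref{e:g-addconv}. Each diagonal entry concentrates around $\omega_{\sigma,\nu}(z)^{-1}$ because $u_j^{(N)}$ is an eigenvector of $\bs B_N$ with eigenvalue $0$; the off-diagonal entries vanish by orthogonality of the $u_j^{(N)}$ combined with the GOE invariance.

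\textbf{Step 3 (identification and dichotomy).} Substituting into the determinantal equation, the outlier candidate converges to the solution of $\omega_{\sigma,\nu}(z) = \theta$, which by the subordination relation~\eqref{e:g-addconv} simplifies to $z = \theta + \sigma^2 g_\nu(\theta) = H_{\sigma,\nu}(\theta)$. A direct calculus argument, as in~\cite{CDFF}, shows that $H_{\sigma,\nu}(\theta)$ lies strictly above $\max\supp(\mu_\sigma\boxplus\nu)$ precisely when $\theta\in\mathcal O_{\sigma,\nu}$, giving case~(i); in case~(ii) no such root exists and Step~1 forces $\lambda_1$ to converge to the bulk edge.

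The \textbf{main obstacle} is the isotropic estimate of Step~2. In~\cite{CDFF} it is obtained in the complex Hermitian setting by exploiting the Haar-unitary invariance of GUE and Weingarten calculus for $U(N)$. For the GOE only Haar-orthogonal invariance is available: several variance constants change by factors of two, and the $U(N)$ Weingarten formulas must be replaced by their $O(N)$ analogues. We expect the argument to go through by combining real Hanson--Wright concentration for $u_j^{T} G_N(z) u_k$ with the conditioning scheme of~\cite{CDFF}, but verifying that every error term in the Schur-complement expansions remains subleading in the real setting is the technical core of the adaptation.
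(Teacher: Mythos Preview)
Your overall strategy---split off a finite-rank spike, use a determinantal identity, and identify outliers via subordination---is the CDFF approach and is sound in spirit. However, there are two concrete gaps, and the paper closes both differently from what you sketch.

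First, the isotropic estimate in Step~2: you correctly identify this as the crux and propose to rederive it for the GOE by hand via $O(N)$ Weingarten calculus and real Hanson--Wright. The paper does not do this. Instead it observes that the entire proof of Theorem~8.1 of~\cite{CDFF} goes through verbatim once Theorem~5.1 of~\cite{CDFF} (the no-outlier/convergence statement in the GUE case) is replaced by Theorem~4.2 of Fan--Sun--Wang~\cite{Fan}, which already supplies the needed input in the real symmetric setting. This is a much shorter route than redoing the Schur-complement expansions; your proposed hand computation would likely work but is substantially more labor, and you have not actually carried it out.

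Second, and more seriously, you never address the fact that $\nu$ need not be compactly supported---and in the paper's application $\supp\nu=(-\infty,-1]$. The CDFF result (and hence your Step~1 ``classical no-outlier result'') assumes compact support, so your reduction does not apply directly. The paper handles this by a two-sided truncation argument: for the lower bound, restrict the quadratic form to coordinates where $A_{ii}\ge -m$ (after diagonalizing $\bs A_N$), obtaining a compactly supported $\nu_m$; for the upper bound, replace $A_{ii}$ by $A_{ii}\vee(-m)$. In both cases one checks that $H_{\sigma,\nu_m}(\theta)\to H_{\sigma,\nu}(\theta)$ and $H'_{\sigma,\nu_m}(\theta)\to H'_{\sigma,\nu}(\theta)$ as $m\to\infty$, then lets $m\to\infty$ after $N\to\infty$. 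The paper also inserts a subsubsequence argument to force the top $r-1$ eigenvalues $\lambda_j(\bs A_N)$ to converge individually before invoking~\cite{CDFF}, which your Step~1 glosses over. Without these ingredients your argument is incomplete for the $\nu$ actually used later.
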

\begin{proof}
We abbreviate $\bs M_N:=\sigma N^{-1/2}\bs X_N + \bs A_N$. Consider an orthogonal diagonalization $\bs A_N = \bs O_N^T \bs D_N \bs O_N$ of $\bs A_N$. As $\bs O_N \bs X_N \bs O_N^T$ is again distributed as a GOE, and as $\bs M_N$ has the same eigenvalues as $\bs O^T \bs M_N \bs O_N$, we henceforth assume w.\,l.\,o.\,g.\ that $\bs A_N$ is diagonal.

We infer the assertions from Theorem~8.1 of~\cite{CDFF} in the case that $\nu$ has compact support.
First, the proof of Theorem 8.1 of~\cite{CDFF} passes through for GOE (in place of GUE) when Theorem~5.1 of~\cite{CDFF} is replaced with Theorem~4.2 of~\cite{Fan}.
We write $\gamma_1:=\theta$. We assume w.\,l.\,o.\,g.\ that $r$ is the minimal integer satisfying assumption~\eqref{e:ass-bulk}.
For any subsequence of $N$ tending to infinity, we find a subsubsequence $(N_i)$ tending to infinity along which $\lambda_j(\bs A_{N_i})$ converges to some $\gamma_j\in(s,\theta]$ for all $j=2,\ldots, r-1$, using compactness of the interval $[s,\theta]$ and minimality of $r$.
Hence, there exists a diagonal matrix $\bs{\wt A}_{N_i}$ with eigenvalues $\lambda_j(\bs{\wt A}_{N_i})=\gamma_j$ whose difference to $\bs A_{N_i}$ vanishes in the Frobenius norm
\begin{equation}\label{e:p-maxgen-F}
	\|\bs{\wt A}_{N_i} - \bs A_{N_i}\|_{\rm F} \overset{i\to\infty}\longrightarrow 0.
\end{equation}
From~\eqref{e:p-maxgen-F}, it follows that $\bs A_{N_i}$ can be replaced with $\bs{\wt A}_{N_i}$ in the definition of $\bs M_{N_i}$ without changing the limiting largest eigenvalue by Lemma~\ref{l:F-max}.

\ch We now assume that $\theta\in\mathcal O_{\sigma,\nu}$ and show assertion~\ref{i:outlier}. In this case, $\bs A_{N_i}$ satisfies the assumptions of Theorem~8.1 1) \ech of~\cite{CDFF},
which yields
\begin{equation}
	\label{e:CDFF-subseq}
	\lim_{i\to\infty}\lambda_{1}(\bs M_{N_i})=H_{\beta,\nu}(\theta)\quad\text{a.\,s.}
\end{equation}
As the limit in~\eqref{e:CDFF-subseq} does not depend on the choice of the subsequence of $N$, it also holds for the original sequence along which $N\to\infty$.

It remains to consider the case of the more general $\nu$ in the assertion. For this, we use truncation arguments for matching upper and lower bounds.

\emph{Lower bound.} For $m\in \R_+$, we consider
\begin{equation}
	V_m:=\left\{ i=1,\ldots,N:\: A_{ii} \ge -m \right\}
\end{equation}
\ch which records the diagonal entries of $\bs A_N$ that have a value at least $-m$. \ech The number of those \ch diagonal entries \ech will be denoted by $N_m=\# V_m$, and we set $r_{m,N}:=\sqrt{N_m/N}$. 
Now,
\begin{multline}\label{e:p-maxgen-lb}
	\lambda_1\left(\bs M_N \right) = \sup \left\{\bs v^T\bs M_N \bs v : \bs v\in \R^N, \|\bs v\|_2 =1 \right\}\\
	\ge   \sup \left\{\bs v^T\bs M_N \bs v : \bs v\in \R^N, \|\bs v\|_2 =1, \max_{i\notin V_m} |v_i|=0 \right\}\\
	= r_{m,N}\sup \left\{\bs v^T\bs M^{(m)}_N \bs v : \bs v\in \R^{N_m}, \|\bs v\|_2 =1 \right\}
	=r_{m,N}\lambda_1\left( \bs M^{(m)}_N\right)
\end{multline}
where
\begin{equation}
	\bs M^{(m)}_N= \sigma N_m^{-1/2}\bs X^{(m)}_N + \bs A^{(m)}_N,\quad
	\bs X^{(m)}_N = (X_{f(i),f(j)})_{i,j\le N_m},\quad
	\bs A^{(m)}_N = {r_{m,N}}^{-1}(A_{f(i),f(j)})_{i,j\le N_m}
\end{equation}
and $f(i)$ denotes the $i$-th largest integer in $V_m$.
Note that $\bs X^{(m)}_N$ is again a GOE of size $N_m$, that $\lim_{N\to\infty}r_{m,N}^2=\nu([-m,s])=:r_m^2$ for all but countably many $m$, and that $\mu_{\bs A^{(m)}_N}$ weakly converges to the probability measure $\nu_m$ as $N\to\infty$, where $\nu_m$ is defined as the image measure of $\nu(\cdot\cap[-m,s])/\nu([-m,s])$ under the dilation $t\mapsto r_{m}^{-1}t$. Also $r_m\to 1$ as $m\to\infty$ by definition of $r_m$.
For $m\ge -2\theta$ and all $N$, we have $r_{m,N}\lambda_1(\bs A^{(m)}_{N})=\theta$, and hence $\lambda_1(\bs A^{(m)}_{N})\to r_m^{-1}\theta$ as $N\to\infty$. Moreover, we note that $\theta\notin\supp\,\nu_m$ for sufficiently large $m$, and from~\eqref{e:def-Hz}, we obtain $\lim_{m\to\infty} H_{\sigma,\nu_m}(\theta)=H_{\sigma,\nu}(\theta)$.
By differentiating~\eqref{e:def-Hz} and using the definition~\eqref{e:def-Stieltjes} of the Stieltjes transform, we also obtain
\begin{equation}
	H_{\beta,\nu_m}'(\theta)=1-\beta^2\int \frac{\nu_m(\rmd x)}{(\theta - x)^2}
	=1-\frac{\beta^2}{\nu[-m,s]}\int_{[-m,s]} \frac{\nu(\rmd x)}{(\theta - r_m^{-1} x)^2},
\end{equation}
which converges to $H'_{\beta,\nu}(\theta)$ as $m\to\infty$. Hence, $H'_{\sigma,\nu_m}(\theta)>0$ for sufficiently large $m$. As $\nu_m$ is compactly supported,  the first part of the proof yields $\lim_{N\to\infty}\lambda_1(\bs M_N^{(m)})= r_m H_{\sigma,\nu_m}(\theta)$ a.\,s. Using~\eqref{e:p-maxgen-lb} and taking $m\to\infty$ yields
$\liminf_{N\to\infty}\lambda_1(\bs M_N)\ge H_{\sigma,\nu}(\theta)$ almost surely.

\emph{Upper bound. } We use the truncation $\bs{\widehat A}^{(m)}_N:=\diag(A_{ii}\vee (-m))_{i=1,\ldots,N}$, and we set $\bs{\wh M}^{(m)}_N:=\sigma N^{-1/2}\bs X_N + \bs{\wh A}^{(m)}_N$.
In place of~\eqref{e:p-maxgen-lb}, we then have
\begin{equation}\label{e:p-maxgen-ub}
	\lambda_1(\bs M_N)=\sup \left\{\bs v^T\bs M_N \bs v : \bs v\in \R^N, \|\bs v\|_2 =1 \right\}\le \lambda_1(\ch\bs{\wh M}_N^{(m)}\ech)
\end{equation}
as
\begin{equation}
	\bs v^T \bs{ A}_N \bs v = \sum_{i=1}^N v_i^2 A_{ii} \le \sum_{i=1}^N v_i^2 \wh A_{ii}^{(m)}=	\bs v^T \bs{\wh A}_N^{(m)} \bs v.
\end{equation}
The empirical spectral distribution $\mu_{\bs{\wh A}^{(m)}_N}$ weakly converges to
\begin{equation}\label{e:def-hat-num}
	\wh \nu_m:= \nu(\cdot\cap[-m,s])+\nu(-\infty,-m)\delta_{-m},
\end{equation}
and we conclude in the same way as for the lower bound.

\bch To show assertion~\ref{i:edge}, we assume that $\theta\in\R\setminus\mathcal O_{\sigma,\nu}\setminus\supp\nu$. Then it follows that $\theta\in\overline{\{u:H'_{\sigma,\nu}(u)<0\}}$ by continuity (see also \cite{CDFF}, p.\,1754). Moreover, assumption~\eqref{e:ass-bulk} implies that $\theta\ge \max\supp\nu$. For $\wh\nu_m$ defined by~\eqref{e:def-hat-num}, we have $H'_{\sigma,\wh\nu_m}(\theta)<H'_{\sigma,\nu}(\theta)\le 0$, and for $m\ge -s$, we have $\theta>\max\supp\wh\nu_m$. If $\bs A_N$ is replaced with the truncated matrix $\bs{\wh A}^{(m)}_N$ from before, assertion~\ref{i:edge} thus follows from Theorem~8.1~2a) of~\cite{CDFF}. For the original diagonal matrix $\bs A_N$, we infer from~\eqref{e:p-maxgen-ub} as before that $\bbP(\lambda_1(\bs M_N) \ge \max\supp\nu + \epsilon)\to 0$ as $N\to\infty$ for each $\epsilon>0$. By definition of $\supp\nu$, we then also have $\bbP(\lambda_1(\bs M_N) \le \max\supp\nu - \epsilon)\to 0$, and assertion~\ref{i:edge} follows.\ech
\end{proof}

\section{Conditional Hessian}
\label{s:Cond}
To analyze the spectral behavior of the Hessian $\bs H^{(k)}$ from~\eqref{e:def-Hk0}, it is useful to condition on the $\sigma$-algebra $\mathcal G_{k-1}$ with respect to which the magnetization $\bs m^{(k)}$ is measurable. Under this conditioning, \bch the distribution of $\bar{\bs g}^{(k)}$ is explicitly known from\ech~\eqref{e:gCov}. In the present section, we show that up to a negligible additive error (as for Lemma~\ref{l:sp}), $\bar{\bs g}^{(k)}$ can be considered as a GOE also under the conditioning on $\mathcal G_{k-1}$. Thus we obtain a representation of $\bs H^{(k)}$ as the sum of a GOE and independent $\mathcal G_{k-1}$-measurable terms.

\bch We recall that the matrix $\bar{\bs g}$ is obtained by $\bar{\bs g}=\tfrac{1}{\sqrt 2}(\bs g + \bs g^T)$. For the covariance matrix $\boldsymbol{V}^{(k)}=(V^{(k)}_{ij,st})_{i,j,s,t\le N}$ of $\bs g$ under $\bbP\left(\cdot\mid \mathcal G_{k-1}\right)$, we give some properties \ech which follow from its definition~\eqref{e:gCov} in terms of the projection $\boldsymbol{Q}^{(k)}$. In the following, we will denote by $\bbP_{k-1}:= \bbP\left(\cdot \mid \mathcal G_{k-1}\right)$ with associated expectation $\E_{k-1}$ the conditional probability given $\mathcal G_{k-1}$.

\begin{lem}
	\label{l:Cov}
	The matrix $\boldsymbol{V}^{(k)}$ is a projection, that is, $\boldsymbol{V}^{(k)}={\boldsymbol{V}^{(k)}}^2$. Furthermore,
	$\boldsymbol{V}^{(k)}=\boldsymbol{1} + \boldsymbol{J}$ for a matrix $\boldsymbol{J}$, where $\bs J$ has eigenvalue $-1$ with multiplicity \ch $N^2-(N-k+1)^2$, \ech and all other eigenvalues are zero.
\end{lem}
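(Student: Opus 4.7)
The plan is entirely elementary linear algebra once one recognizes that $\boldsymbol{V}^{(k)}$ is a Kronecker product. First I would verify that $\boldsymbol{P}^{(k-1)}$ defined by~\eqref{e:def-P} is an orthogonal projection of rank $k-1$: since the vectors $\{\phi^{(s)}\}_{s\le k-1}$ are orthonormal with respect to $\langle\cdot,\cdot\rangle$ (which carries the factor $1/N$), a direct computation using $\tfrac{1}{N}\sum_i \phi^{(s)}_i\phi^{(r)}_i = \delta_{sr}$ gives $(\boldsymbol{P}^{(k-1)})^2=\boldsymbol{P}^{(k-1)}$ and $\tr\boldsymbol{P}^{(k-1)}=k-1$. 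Consequently $\boldsymbol{Q}^{(k-1)}=\boldsymbol{1}-\boldsymbol{P}^{(k-1)}$ is an orthogonal projection of rank $N-k+1$, with eigenvalue $1$ of multiplicity $N-k+1$ and eigenvalue $0$ of multiplicity $k-1$.

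Next I would interpret the formula $V^{(k)}_{ij,st}=Q^{(k-1)}_{is}Q^{(k-1)}_{jt}$: viewing $\boldsymbol{V}^{(k)}$ as an $N^2\times N^2$ matrix indexed by pairs $(i,j)$ and $(s,t)$, this is precisely the Kronecker product $\boldsymbol{V}^{(k)}=\boldsymbol{Q}^{(k-1)}\otimes\boldsymbol{Q}^{(k-1)}$. Using the mixed-product identity $(\boldsymbol{A}\otimes\boldsymbol{B})(\boldsymbol{C}\otimes\boldsymbol{D})=(\boldsymbol{A}\boldsymbol{C})\otimes(\boldsymbol{B}\boldsymbol{D})$, one then gets
\begin{equation*}
(\boldsymbol{V}^{(k)})^{2}=(\boldsymbol{Q}^{(k-1)})^{2}\otimes(\boldsymbol{Q}^{(k-1)})^{2}=\boldsymbol{Q}^{(k-1)}\otimes\boldsymbol{Q}^{(k-1)}=\boldsymbol{V}^{(k)},
\end{equation*}
which shows $\boldsymbol{V}^{(k)}$ is a projection.

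For the multiplicities, I would invoke the standard fact that the spectrum of $\boldsymbol{A}\otimes\boldsymbol{B}$ consists of all products $\lambda_i\mu_j$ of eigenvalues of $\boldsymbol{A}$ and $\boldsymbol{B}$, counted with multiplicity. Applied to $\boldsymbol{Q}^{(k-1)}\otimes\boldsymbol{Q}^{(k-1)}$, this yields eigenvalue $1$ with multiplicity $(N-k+1)^{2}$ and eigenvalue $0$ with multiplicity $N^{2}-(N-k+1)^{2}$. Setting $\boldsymbol{J}:=\boldsymbol{V}^{(k)}-\boldsymbol{1}$, all eigenvalues are shifted by $-1$, so $\boldsymbol{J}$ has eigenvalue $0$ with multiplicity $(N-k+1)^{2}$ and eigenvalue $-1$ with multiplicity $N^{2}-(N-k+1)^{2}$, as claimed.

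There is no real obstacle here; the only point worth stating carefully is the Kronecker-product interpretation, because the indexing of $\boldsymbol{V}^{(k)}$ as a bilinear form on pairs $(i,j)$ (originating from the covariance of $g^{(k)}_{ij}$ and $g^{(k)}_{st}$) can obscure the fact that everything reduces to eigeninformation of the single projection $\boldsymbol{Q}^{(k-1)}$.
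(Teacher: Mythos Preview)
Your proof is correct and is essentially the same as the paper's, only more cleanly packaged: the paper also exploits the tensor structure $V^{(k)}_{ij,st}=Q^{(k-1)}_{is}Q^{(k-1)}_{jt}$, but instead of naming it as a Kronecker product it builds $\wt O_{ij,st}=O_{is}O_{jt}$ and $\wt D_{ij,st}=D_{is}D_{jt}$ by hand (these are exactly $\bs O\otimes\bs O$ and $\bs D\otimes\bs D$) and verifies orthogonality and the eigenvalue count explicitly. Your use of the mixed-product identity and the spectrum-of-Kronecker-products fact is precisely what the paper is doing componentwise.
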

\begin{proof}
	By~\eqref{e:def-P} and as $\boldsymbol{Q}^{(k-1)}$ is a projection,
	\begin{equation}
		V^{(k)}_{ij,st}=Q^{(k-1)}_{is}Q^{(k-1)}_{jt}=\bigg(\sum_{u=1}^N Q^{(k-1)}_{iu}Q^{(k-1)}_{us}\bigg)
		\bigg(\sum_{v=1}^N Q^{(k-1)}_{jv}Q^{(k-1)}_{vt}\bigg)
		=\sum_{u,v=1}^N V^{(k)}_{ij,uv}V^{(k)}_{uv,st},
	\end{equation}
	which shows that $\boldsymbol{V}^{(k)}$ is a projection.
	
	To show the assertion on the eigenvalues of $\boldsymbol{J}$, we first note that
	\begin{equation}
		\boldsymbol{Q}^{(k-1)}= \boldsymbol{1}- \bs P^{(k-1)} = \boldsymbol{1}-\frac1N \sum_{s=1}^{k-1} \phi^{(s)} {\phi^{(s)}}^T= \boldsymbol{1}-\boldsymbol{O}\left(\sum_{s=1}^{k-1}\bs D^{(s)}\right) {\boldsymbol{O}^{T}},
	\end{equation}
	where $\boldsymbol{O}$ is an orthogonal matrix and $\bs D^{(s)}$ are diagonal matrices with one \ch entry equal to $1$, and the other entries equal to $0$. \ech The last equality is due to the fact that $\bs P^{(k-1)}$ is a sum of projectors of rank 1 to orthogonal subspaces: thus, these projectors are orthogonally diagonalisable in the same basis. Let
	\begin{equation}
		\bs D = \boldsymbol{1}-\sum_{s=1}^{k-1} \bs D^{(s)},
	\end{equation}
	one readily checks that $\bs D$ has $k-1$ entries that are equal to $0$ and the rest equal to $1$.
	Defining $\bs J$ by $\ch \bs V^{(k)} = \bs 1 + \bs J\ech$ and using the definition~\eqref{e:gCov} of $\ch \boldsymbol{V}^{(k)} \ech$, we obtain
	\begin{equation}\label{e:l:Cov-p1}
		J_{ij,st}=Q^{(k-1)}_{is}Q^{(k-1)}_{jt}-\delta_{ij,st}=
		\sum_{u,v=1}^N O_{ui}D_{uu}O_{us}O_{vj}D_{vv}O_{vt} -\delta_{ij,st}.
	\end{equation}
	Next we define $\boldsymbol{\wt O}$ and $\boldsymbol{\wt D}$ by $\wt O_{ij,st} = O_{is}O_{jt}$ and $\wt D_{ij,st} = D_{is}D_{jt}$.
	Then $\boldsymbol{\wt O}$ is orthogonal as
	\begin{equation}
		(\bs{\wt O}^T\bs{\wt O})_{ij,st}=\sum_{u,v=1}^N O_{i,u} O_{j,v} O_{u,s} O_{v,t} = (\bs O^T \bs O)_{is}(\bs O^T \bs O)_{jt} =\delta_{is}\delta_{jt}.
	\end{equation}
	Hence, we get from~\eqref{e:l:Cov-p1} that $\boldsymbol{J}=\boldsymbol{\wt O}^T (\boldsymbol{ \wt D} - \boldsymbol{1})  \boldsymbol{\wt O}$.
	As the diagonal matrix $(\boldsymbol{ \wt D} - \boldsymbol{1})$ has $\ch(N-k+1)^2\ech$ entries equal to \ch zero\ech, the other entries being \ch $-1$, \ech the assertion follows.
	
\end{proof}
As a consequence, we can approximate $\bar{\bfg}^{(k)}$ by a GOE:
\begin{lem}\label{l:GOE-k}
	Under $\bbP\left(\cdot\mid\mathcal G_{k-1}\right)$, there exists a GOE $\bf X$ such that
	$\bar\bfg^{(k)}=\bs X + \bs Y$ and $\ch N^{-1/2} \ech \|\bs Y\|_{\rm F}$ is tight in $N$.
\end{lem}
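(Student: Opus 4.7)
The plan is to construct $\bs X$ by coupling $\bs g^{(k)}$ to a matrix of i.i.d.\ standard Gaussians and then symmetrizing. The key input is Lemma~\ref{l:Cov}, which says that $\bs V^{(k)}$ is an orthogonal projection on $\R^{N\times N}$ whose complementary projection $\bs 1 - \bs V^{(k)}$ has rank $N^2-(N-k+1)^2 = 2N(k-1)-(k-1)^2$, which is $O(N)$ for fixed $k$. Thus $\bs g^{(k)}$ is already an ``almost full'' Gaussian in $\R^{N\times N}$; we only need to add $O(N)$-many ``missing'' Gaussian directions and then symmetrize.

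First, enlarge the probability space to support, under $\bbP_{k-1}$, a random matrix $\bs Z'\in\R^{N\times N}$ whose entries are i.i.d.\ standard Gaussian and independent of $\bs g^{(k)}$. Viewing $\bs V^{(k)}$ as a linear operator on $\R^{N\times N}$, define
\begin{equation*}
\bs Z := \bs g^{(k)} + (\bs 1 - \bs V^{(k)})\bs Z'.
\end{equation*}
Under $\bbP_{k-1}$, both summands are centered Gaussian and independent, hence $\bs Z$ is centered Gaussian. Using that $\bs V^{(k)}$ is self-adjoint and idempotent (Lemma~\ref{l:Cov}), its covariance operator is $\bs V^{(k)} + (\bs 1 - \bs V^{(k)})(\bs 1 - \bs V^{(k)})^T = \bs V^{(k)} + (\bs 1 - \bs V^{(k)}) = \bs 1$, so $\bs Z$ has i.i.d.\ standard Gaussian entries under $\bbP_{k-1}$. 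Consequently,
\begin{equation*}
\bs X := \tfrac{1}{\sqrt 2}\bigl(\bs Z + \bs Z^T\bigr)
\end{equation*}
is a GOE with unit off-diagonal variance, matching the convention used for $\bar{\bs g}$.

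Now set $\bs Y := \bs X - \bar{\bs g}^{(k)}$. Since $\bar{\bs g}^{(k)} = \tfrac{1}{\sqrt 2}(\bs g^{(k)} + (\bs g^{(k)})^T)$, a direct computation gives
\begin{equation*}
\bs Y = \tfrac{1}{\sqrt 2}\bigl((\bs 1 - \bs V^{(k)})\bs Z' + ((\bs 1 - \bs V^{(k)})\bs Z')^T\bigr),
\end{equation*}
so by the triangle inequality for the Frobenius norm,
\begin{equation*}
\|\bs Y\|_{\rm F} \le \sqrt{2}\,\|(\bs 1 - \bs V^{(k)})\bs Z'\|_{\rm F}.
\end{equation*}
By Lemma~\ref{l:Cov}, $\bs 1 - \bs V^{(k)}$ is an orthogonal projection of rank $2N(k-1)-(k-1)^2$, so $\|(\bs 1 - \bs V^{(k)})\bs Z'\|_{\rm F}^2$ is $\chi^2$-distributed with $2N(k-1)-(k-1)^2$ degrees of freedom under $\bbP_{k-1}$. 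In particular, $\E_{k-1}\|\bs Y\|_{\rm F}^2 \le 2\bigl(2N(k-1)-(k-1)^2\bigr)$, and Markov's inequality yields
\begin{equation*}
\bbP_{k-1}\bigl(N^{-1/2}\|\bs Y\|_{\rm F} > t\bigr) \le \frac{4(k-1)}{t^2}
\end{equation*}
for all $N$, which is tightness.

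There is no real obstacle: the argument is a standard Gaussian coupling via projections, enabled by the structural fact from Lemma~\ref{l:Cov} that $\bs V^{(k)}$ is an orthogonal projection whose co-rank is $O(N)$. The only minor subtlety is that the GOE $\bs X$ lives on an enlargement of the original space carrying the auxiliary matrix $\bs Z'$, which is harmless since the lemma only asserts the existence of such a GOE.
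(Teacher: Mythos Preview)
Your proof is correct and follows essentially the same approach as the paper: both use Lemma~\ref{l:Cov} to couple $\bs g^{(k)}$ with a matrix $\bs Z$ of i.i.d.\ standard Gaussians (the paper writes $\bs g^{(k)}=\bs V^{(k)}\bs Z$ and sets $\bs Y$ to be the symmetrization of $(\bs V^{(k)}-\bs 1)\bs Z$, while you add the complementary piece $(\bs 1-\bs V^{(k)})\bs Z'$), then symmetrize and bound $\|\bs Y\|_{\rm F}^2$ via the $O(N)$ co-rank of the projection. Up to a harmless sign convention in your definition of $\bs Y$ (you want $\bs Y:=\bar{\bs g}^{(k)}-\bs X$ so that $\bar{\bs g}^{(k)}=\bs X+\bs Y$), the two arguments are the same; your explicit identification of the $\chi^2$ distribution is a slightly cleaner way to reach the tightness bound.
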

\begin{proof}
	From~\eqref{e:gCov} and as $\bs V^{(k)}= {\bs V^{(k)}}^2$ by Lemma~\ref{l:Cov}, there exists a vector $\bs Z=(Z_{ij})_{ij\le N}$ of length $N^2$ whose entries are iid standard Gaussians, such that $g^{(k)}_{ij}=(\bs V^{(k)} \bs Z)_{ij}$ for all $i,j \leq N$. Using again Lemma~\ref{l:Cov}, we diagonalize $\bs V^{(k)} - \bs 1 = \bs O^T \bs D \bs O$, where $\bs O$ is \ch a $\mathcal G_{k-1}$-measurable \ech orthogonal matrix, and $\bs D$ is a deterministic diagonal matrix with \ch $N^2-(N-k+1)^2$ \ech entries equal to $-1$ and the rest to $0$.
	Then we write
	\begin{equation}
		g^{(k)}_{ij} = \left((\bs V^{(k)}-\bs 1) \bs Z\right)_{ij}+Z_{ij}=\left(\bs O^T \bs D\bs O \bs Z\right)_{ij}+Z_{ij}.
	\end{equation}
	We have $\bar\bfg=\bs Y + \bs X$, where
	\begin{equation}
		Y_{ij}:= \frac{1}{\sqrt2}[\left(\bs O^T \bs D\bs O \bs Z\right)_{ij}+ \left(\bs O^T \bs D\bs O \bs Z\right)_{ji}],\quad X_{ij}:= \frac{1}{\sqrt2}[Z_{ij}+Z_{ji}],
	\end{equation}
	where $\bs X,\bs Y$ are $N \times N$ matrices. It remains to prove that $\ch N^{-1/2} \ech \|\bs Y\|_{\rm F}$ is tight in $N$. By a simple convexity argument,
	\begin{equation}
		\|\bs Y\|_{\rm F}^2= \sum_{i,j=1}^{N}Y_{ij}^2\leq \sqrt2 \sum_{i,j=1}^{N}\left[ \left(\left(\bs O^T \bs D\bs O \bs Z\right)_{ij}\right)^2+ \left(\left(\bs O^T \bs D\bs O \bs Z\right)_{ji}\right)^2\right].
	\end{equation}
	By symmetry, it remains to consider
	\begin{equation}
		\sum_{i,j=1}^{N}\left(\left(\bs O^T \bs D\bs O \bs Z\right)_{ij}\right)^2 = \|\bs O^T \bs D\bs O \bs Z \|_{\ell_2(\R^{N^2})}^2
	\end{equation}
	and to show that this expression, \bch when multiplied by $N^{-1}$, \ech is tight in $N$. As the $\ell_2$-norm is invariant under orthogonal transformations, and as $\bs O\bs Z$ is again standard Gaussian distributed, we have
	\begin{equation}
		\| \bs O^T \bs D\bs O \bs Z\|_{\ell_2(\R^{N^2})} = \|\bs D\bs O \bs Z\|_{\ell_2(\R^{N^2})} \stackrel{d}{=} \| \bs D \bs Z\|_{\ell_2(\R^{N^2})}.
	\end{equation}
	Note that \ch $N^2-(N-k+1)^2$ many entries \ech of the vector $\bs D \bs Z$ are $\mathcal  N(0,1)$-distributed, the other entries being $0$. Therefore, \ch by the law of large numbers, $N^{-1}\|\bs D \bs Z\|_{\ell_2(\R^{N^2})}^2$ \ech is tight in $N$, which yields the assertion. 
\end{proof}
We consider the Hessian $\bs H^{(k)}$ from~\eqref{e:def-Hk0}
which reads
\begin{multline}\label{e:def-H}
	H^{(k)}_{ij}=\frac{\beta}{\sqrt{N}} \bar g_{ij} + \frac{2\beta^2}{N} m^{(k)}_i m^{(k)}_j,\quad i,j=1,\ldots,N, i\ne j\\
	H^{(k)}_{ii}=-\beta^2 \left(1-\frac1N\sum_{p=1}^N {m^{(k)}_p}^2\right) -\frac{1}{1-{m^{(k)}_i}^2} +\frac{2\beta^2}{N} {m^{(k)}_i}^2. 
\end{multline}
Now we obtain the following approximation under $\bbP$:
\begin{lem}\label{l:H}
	Let \ch the diagonal matrix \ech $\bs A^{(k)}$ be defined by
	\begin{equation}
		A^{(k)}_{ii} = - \frac{1}{1-{m^{(k)}_i}^2}, i=1,\ldots, N,\quad A^{(k)}_{ij}=0 \text{ for }i\ne j,
	\end{equation}
	and let
	\begin{equation}\label{e:def-tA}
		\bs B^{(k)} = 	2\beta^2 \bfm^{(k)}\otimes \bfm^{(k)}
		+ \beta\sum_{s=1}^{k-1} \big( \zeta^{(s)} \otimes \phi^{(s)} + \phi^{(s)} \otimes \zeta^{(s)}\big).
	\end{equation}
	Then, \bch under $\bbP\left(\cdot\mid\mathcal G_{k-1}\right)$, there is a GOE matrix $\bs X$ such that \ech
	\begin{equation}\label{e:H-tH} 
		\bs H^{(k)}= \frac{\beta}{\sqrt{N}} {\bs X} + \bs A^{(k)} +\bs B^{(k)} -\beta^2(1-q)\bs{1}
		+ \bs R -\epsilon\boldsymbol{1}
	\end{equation}
	where, \bch in probability, \ech $\ch N^{-1} \ech\|\bs R\|_{\rm F}\to 0$ and $\epsilon\to 0$, as $N\to\infty$.
\end{lem}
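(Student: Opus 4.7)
The plan is to proceed by direct algebraic bookkeeping: I write out each entry of $\bs H^{(k)}$ using~\eqref{e:def-H}, use the expansion~\eqref{e:ggk} to express $\bar{\bs g}$ in terms of $\bar{\bs g}^{(k)}$ plus Bolthausen's rank corrections, and then apply Lemma~\ref{l:GOE-k} to write $\bar{\bs g}^{(k)} = \bs X + \bs Y$ under $\bbP_{k-1}$ with $\bs X$ a GOE and $N^{-1/2}\|\bs Y\|_{\rm F}$ tight. Everything not matching the target decomposition gets collected into $\bs R$ and $\epsilon$, and I check that the resulting contributions satisfy the stated norm bounds.

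The decisive algebraic identity is the symmetrization of~\eqref{e:def-rho}: since $\zeta^{(s)} = \tfrac{1}{\sqrt 2}(\xi^{(s)}+\eta^{(s)})$, one gets $\rho^{(s)}+{\rho^{(s)}}^T = \sqrt 2\bigl(\zeta^{(s)}\otimes\phi^{(s)}+\phi^{(s)}\otimes\zeta^{(s)}\bigr) - 2\langle\phi^{(s)},\xi^{(s)}\rangle\,\phi^{(s)}\otimes\phi^{(s)}$. Plugging this into~\eqref{e:ggk} and multiplying by $\beta/\sqrt N$ produces exactly the sum $\beta\sum_{s=1}^{k-1}(\zeta^{(s)}\otimes\phi^{(s)}+\phi^{(s)}\otimes\zeta^{(s)})$ appearing in~\eqref{e:def-tA}, so the off-diagonal part of $\frac{\beta}{\sqrt N}\bar{\bs g}$ is accounted for by the GOE piece $\frac{\beta}{\sqrt N}\bs X$ together with the outer-product portion of $\bs B^{(k)}$. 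The quadratic piece $\frac{2\beta^2}{N}m^{(k)}_i m^{(k)}_j$ of $\bs H^{(k)}$ (which, thanks to the $\frac{2\beta^2}{N}{m_i^{(k)}}^2$ term of $H^{(k)}_{ii}$, actually agrees with $2\beta^2 \bs m^{(k)}\otimes\bs m^{(k)}$ also on the diagonal) then coincides with the first term of $\bs B^{(k)}$. The $-1/(1-{m^{(k)}_i}^2)$ entries of the diagonal are by definition $\bs A^{(k)}$, and the scalar shift $-\beta^2(1-\|\bs m^{(k)}\|^2)\bs 1$ is split as $-\beta^2(1-q)\bs 1 - \epsilon \bs 1$ with $\epsilon := -\beta^2(\|\bs m^{(k)}\|^2 - q)$; by~\eqref{e:mq} this $\epsilon$ tends to $0$ exponentially fast in probability.

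It then remains to show $N^{-1}\|\bs R\|_{\rm F}\to 0$, where $\bs R$ consists of three pieces. First, $\frac{\beta}{\sqrt N}\bs Y$ has Frobenius norm $O_P(1)$ by Lemma~\ref{l:GOE-k}, so $N^{-1}$ times it vanishes. Second, the rank-$(k-1)$ correction $-\sqrt 2\,\beta\sum_s \langle\phi^{(s)},\xi^{(s)}\rangle\,\phi^{(s)}\otimes\phi^{(s)}$: each outer product has $\|\phi^{(s)}\otimes\phi^{(s)}\|_{\rm F}=\|\phi^{(s)}\|^2 = 1$, the scalar $\langle\phi^{(s)},\xi^{(s)}\rangle$ is even $\simeq 0$ by Lemma~11 of~\cite{b2}, so the whole piece is of bounded Frobenius norm. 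Third, the diagonal discrepancy $-\frac{\beta}{\sqrt N}\diag(\bar g_{ii})+\frac{\beta}{\sqrt N}\diag(X_{ii})$, which arises because $\bs H^{(k)}$ uses only the off-diagonal entries of $\bar{\bs g}$ while the GOE $\bs X$ carries full diagonal entries; both diagonals have Frobenius norm $O_P(\sqrt N)$, so the contribution to $N^{-1}\|\bs R\|_{\rm F}$ is $O_P(N^{-1})$.

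The proof is essentially bookkeeping, so there is no deep obstacle: the main conceptual content is the observation that the symmetrization of the Bolthausen orthogonality correction $\rho^{(s)}$ matches the $\zeta\otimes\phi + \phi\otimes\zeta$ structure of $\bs B^{(k)}$ up to a rank-finite, Frobenius-bounded residual. The only care required is in splitting off the diagonal of $\bar{\bs g}$ consistently when passing from the formula for $H^{(k)}_{ij}$ ($i\ne j$) to a full matrix identity involving the GOE $\bs X$.
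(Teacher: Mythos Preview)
Your proof is correct and follows the same approach as the paper's: expand $\bar{\bs g}$ via~\eqref{e:ggk} and the symmetrization of~\eqref{e:def-rho}, apply Lemma~\ref{l:GOE-k}, and collect residuals into $\bs R$. Your treatment of the diagonal is actually more explicit than the paper's (which omits the $-\tfrac{\beta}{\sqrt N}\diag(\bar g_{ii})$ term from its displayed $\bs R$), though the extra $+\tfrac{\beta}{\sqrt N}\diag(X_{ii})$ you list is superfluous --- the correct diagonal residual is just $-\tfrac{\beta}{\sqrt N}\diag(\bar g_{ii})$, which as you observe has $O_P(1)$ Frobenius norm and hence does not affect the conclusion.
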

\begin{proof}
	We set $\epsilon = \beta^2(q-\|\bfm^{(k)}\|^2)$, then $\epsilon\to 0$ in probability as $N\to\infty$ by~\eqref{e:mq}.
	Using the definitions~\eqref{e:def-H}, \eqref{e:def-gk} and~\eqref{e:def-rho}, we can then set
	\begin{equation}\label{e:lp:H}
		\bs R = -\sqrt2 \sum_{s=1}^{k-1} \langle \phi^{(s)},\xi^{(s)} \rangle (\phi^{(s)}\otimes\phi^{(s)})+ \beta N^{-1/2}\bs Y
	\end{equation}
	with $\bs Y$ from Lemma~\ref{l:GOE-k}, so that $N^{-1}\|\bs R\|_F$ converges to zero in probability: for the first term on the r.h.s.\ of~\eqref{e:lp:H}, we note that $\|\phi^{(s)}\otimes\phi^{(s)}\|^2_{\rm F}=\|\phi^{(s)}\|^2=1$. \bch Taking the GOE matrix $\bs X$ from Lemma~\ref{l:GOE-k},  it then \ech suffices to show for each $s$ that
	$ \langle \phi^{(s)},\zeta^{(s)} \rangle \to 0$ $\bbP\left(\cdot\mid\mathcal G_{k-1}\right)$\bch -a.s.\ \ech as $N\to\infty$. This, however, follows from Lemma~11 of~\cite{b2} which states that  $\langle \phi^{(s)},\xi^{(s)} \rangle$ is a centered Gaussian with variance $1/N$ under $\bbP$, hence it converges to $0$ $\bbP$-a.\,s.\ by \bch the Borel-Cantelli Lemma. \ech
\end{proof}

\section{Proof for weak limit of spectral distribution}
\label{s:p-H0}
The proof of Theorem~\ref{t:H0} comes in two parts: first we show, using Bolthausen's algorithm, that $\bs H^{(k)}$ can be considered asymptotically as $N\to\infty$ followed by $k\to\infty$ as the sum of a GOE with variance $\beta/N$, a deterministic diagonal matrix $-\beta^2(1-q)\bs 1$, and an independent diagonal matrix with independent entries having distribution
\begin{equation}\label{e:def-nu}
	\nu:=\mathcal{L}\left(-\frac{1}{1-\tanh^2(h+\beta\sqrt{q}Z)}\right)\,,
\end{equation}
$Z$ being a standard Gaussian. The limiting spectrum of such a sum can be characterized as a free convolution.
We also set $\hat\nu:=\nu(\cdot + \beta^2(1-q))$, then $\hat\nu$ is the image measure of $\nu$ under the shift $t\mapsto t-\beta^2(1-q)$.

\begin{proof}[Proof of Theorem~\ref{t:H0}]
We can rewrite $\bs B^{(k)}$ as
\begin{equation}\label{e:B-proj}
		\bs B^{(k)} = 	2\beta^2 \bfm^{(k)}\otimes \bfm^{(k)}
	+ \tfrac12\beta\sum_{s=1}^{k-1} \left[\big( \zeta^{(s)} + \phi^{(s)}\big) \otimes \big(\zeta^{(s)} + \phi^{(s)} \big)
	-\big( \zeta^{(s)} - \phi^{(s)}\big) \otimes \big(\zeta^{(s)} - \phi^{(s)} \big) \right]
\end{equation}
which is a sum of $2k-1$ matrices of rank $1$. Hence, by Lemma \ref{l:sp} (and induction over $k$),  $\bs B^{(k)}$ has no influence on the limiting spectral distribution of $\bs H^{(k)}$ as $N\to\infty$. Thus, the empirical spectral distribution of $\bs M:=\beta N^{-1/2}\bs X+\bs A^{(k)}-\beta^2(1-q)\bs 1$ converges by Lemmas~\ref{l:sp} and~\ref{l:H} and Slutzky's lemma to the same weak limit as $\mu_{\bs H^{(k)}}$ a.\,s.\ as $N\to\infty$ followed by $k\to\infty$.

\ch By Lemma~\ref{l:LLN}, we have
\begin{equation}
	\int\mu_{\bs A^{(k)}}(\rmd x) f(x)\longrightarrow \int \nu(\rmd x) f(x)
\end{equation}
in probability as $N\to\infty$ for each bounded and continuous $f$. This convergence also holds simultaneously for a countable set of functions $f$ such as the polynomials in $\tanh(x)$ with rational coefficients. By Skorohod coupling, we may assume that this simultaneous convergence holds a.s., so that we can deduce that the weak convergence $\mu_{\bs A^{(k)}}\to \nu$ holds a.s.\ as $N\to\infty$. Using that $\bs X$ and $\bs A^{(k)}$ are independent, we now condition on $\bs A^{(k)}$ and apply Lemma~\ref{l:addconv} to infer that the empirical spectral distribution of $\bs{ M}$ converges a.\,s.\ in the weak topology as $N\to\infty$ to the free additive convolution $\mu_\beta\boxplus \hat\nu$. Without the Skorohod coupling, the empirical spectral distribution of $\bs{ M}$ still converges weakly in distribution to $\mu_\beta\boxplus \hat\nu$. \ech The assertion now follows from Lemma~\ref{l:suppfc} below.
\end{proof}
The support $\supp\,\mu$ of a probability measure $\mu$ on $\R$ is defined by
\begin{equation}
	\supp \mu :=\R\setminus \left\{t\in\R: \exists \epsilon>0 \text{ with } \mu(t-\epsilon, t+\epsilon)=  0\right\}.
\end{equation}
\begin{lem}\label{l:suppfc}
	The free additive convolution $\mu_\beta\boxplus \nu$ has support of the form $(-\infty,d]$ with $d<\beta^2(1-q)$ below and above the AT line (i.e.\ if~\eqref{e:AT} holds with strict inequality or if~\eqref{e:AT} does not hold), and $d=\beta^2(1-q)$ on the AT line (i.e.\ if~\eqref{e:AT} holds with equality).
\end{lem}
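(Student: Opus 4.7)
The plan is to characterize the right edge $d$ of $\supp(\mu_\beta\boxplus\nu)$ as $H(u^*)$, where $H(u):=u+\beta^2 g_\nu(u)$ and $u^*$ is the unique minimizer of $H$ on $\R\setminus\supp\nu$, and then to compare $H(u^*)$ with $\beta^2(1-q)$ by evaluating $H$ and $H'$ at the single point $u=0$.

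First I would identify $\supp\nu$. Since $Y:=-1/(1-\tanh^2(h+\beta\sqrt{q}Z))=-\cosh^2(h+\beta\sqrt{q}Z)\le -1$, a change of variables shows that $\nu$ admits a density on $(-\infty,-1)$ with an integrable inverse-square-root singularity at $-1$; hence $\supp\nu=(-\infty,-1]$ and $\R\setminus\supp\nu=(-1,\infty)$. On this open half-line, $H$ is smooth with $H''(u)=2\beta^2\EE[(u-Y)^{-3}]>0$, hence strictly convex; the boundary behaviour $H(u)\to+\infty$ as $u\to-1^+$ (from divergence of $g_\nu$ near $-1$) and $H(u)\sim u\to+\infty$ as $u\to+\infty$ yields a unique minimizer $u^*\in(-1,\infty)$, determined by $H'(u^*)=0$.

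I would next invoke the support characterization for the free convolution with a semicircle distribution due to Biane~\cite{Biane}: the complement of $\supp(\mu_\beta\boxplus\nu)$ in $\R$ coincides with the $H$-image of the open set $\mathcal O:=\{u\in\R\setminus\supp\nu:H'(u)>0\}$. In our setting $\mathcal O=(u^*,\infty)$ and $H|_{\mathcal O}$ is a strictly increasing bijection onto $(H(u^*),\infty)$, so $\supp(\mu_\beta\boxplus\nu)=(-\infty,H(u^*)]$, giving the claimed form with $d=H(u^*)$.

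Finally I would evaluate $H$ at $u=0$. Using the identity $-1/Y=1-\tanh^2(h+\beta\sqrt{q}Z)$ together with the fixed-point relation $q=\EE\tanh^2(h+\beta\sqrt{q}Z)$, one computes $H(0)=-\beta^2\EE[1/Y]=\beta^2(1-q)$ and $H'(0)=1-\beta^2\EE[1/Y^2]=1-\beta^2\EE[1/\cosh^4(h+\beta\sqrt{q}Z)]$, so $H'(0)$ is positive, zero, or negative according as the AT condition~\eqref{e:AT} holds strictly, with equality, or fails. By strict convexity the minimizer $u^*$ equals $0$ iff $H'(0)=0$; in all remaining cases $u^*\ne 0$ and $H(u^*)<H(0)=\beta^2(1-q)$. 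This gives $d=\beta^2(1-q)$ on the AT line and $d<\beta^2(1-q)$ both below and above it. The main obstacle is appealing to Biane's support formula cleanly, since the excerpt only records the Stieltjes subordination identity in Lemma~\ref{l:addconv}; a self-contained substitute would study the boundary values of $g$ via $g(z)=g_\nu(z-\beta^2 g(z))$ and verify that $\mathrm{Im}\,g(y+i0^+)$ vanishes precisely on $(H(u^*),\infty)$, but this essentially repackages Biane's analysis.
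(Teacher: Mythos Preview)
Your proof is correct and follows essentially the same approach as the paper's: both invoke Biane's support characterization (the paper via Proposition~2.2 of~\cite{CDFF}, noting it extends to non-compactly supported $\nu$), identify $\supp\nu=(-\infty,-1]$, compute $H_{\beta,\nu}(0)=\beta^2(1-q)$ and $H'_{\beta,\nu}(0)=1-\beta^2\EE\cosh^{-4}(h+\beta\sqrt{q}Z)$, and use the strict monotonicity of $H'_{\beta,\nu}$ on $(-1,\infty)$ to locate the minimizer relative to $0$. Your presentation is slightly more explicit in verifying the boundary behaviour $H(u)\to+\infty$ as $u\to-1^+$ via the inverse-square-root singularity of the density of $\nu$, and in phrasing the argument through strict convexity rather than monotonicity of $H'$, but these are cosmetic differences rather than a different route.
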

\begin{proof}
	 Let $H_{\beta,\nu}(z)$ be defined by~\eqref{e:def-Hz} and $\mathcal O_{\beta,\nu}$ by~\eqref{e:def-O}.
	From the work of Biane~\cite{Biane}, see Proposition~2.2 of~\cite{CDFF}, we have the equivalence
	\begin{equation}
		x\in\R\setminus\supp\mu_\beta\boxplus \nu\quad\Leftrightarrow\quad \exists u\in\mathcal O_{\beta, \nu}\text{ such that } x=H_{\beta, \nu}(u),
	\end{equation}
	noting that the proof of Proposition~2.2 of~\cite{CDFF} passes through even though our $\nu$ is not compactly supported.
Let
\begin{equation}
	d:=\inf_{u\in \mathcal O_{\beta,\nu}} H_{\beta,\nu}(u).
\end{equation}
We note that $\supp \nu=(-\infty,-1]$ and
\begin{equation}\label{e:phi'}
	H_{\beta,\nu}'(u)=1-\beta^2\EE\left(\frac{1}{(u+\frac{1}{1-\tanh^2(h+\beta\sqrt{q} Z)})^2}\right),
\end{equation}
For $u=0$, we evaluate
\begin{equation}\label{e:H0}
	H_{\beta,\nu}(0)=\beta^2\EE\left( 1-\tanh^2(h+\beta\sqrt{q} Z\right)=\beta^2(1-q).
\end{equation}

From~\eqref{e:phi'} and as $1-\tanh^2(x)=\cosh^{-2}(x)$, we can rewrite
\begin{equation}
	H'_{\beta,\nu}(0)=1-\beta^2\EE\cosh^{-4}(h+\beta\sqrt{q}Z).
\end{equation}
Hence, the AT condition~\eqref{e:AT} is equivalent to $H_{\beta,\nu}'(0)\ge 0$, and that~\eqref{e:AT} with strict inequality is equivalent to~$H_{\beta,\nu}'(0)> 0$.  Moreover, \eqref{e:phi'} shows that $H_{\beta,\nu}'(u)$ is strictly increasing in $u\in(-1,\infty)$. From~\eqref{e:def-Hz} and as $\supp\nu=(-\infty,-1]$, we obtain that $H_{\beta,\nu}$ exists and is analytic in $(-1,\infty)$.

We first consider $(\beta,h)$ that satisfy~\eqref{e:AT} with strict inequality. Then from $H_{\beta,\nu}'(0)>0$, we infer that $H_{\beta,\nu}$ attains its infimum over $\mathcal O_{\beta,\nu}$ at some $u_*<0$, and $d=H_{\beta,\nu}(u_*)<H_{\beta,\nu}(0)=\beta^2(1-q)$.

Next, we consider the case hat $(\beta,h)$ does not satisfy~\eqref{e:AT}. Then from $H_{\beta,\nu}'(0)<0$, we infer that $H_{\beta,\nu}$ attains its infimum over $\mathcal O_{\beta,\nu}$ at some $u_*>0$, that $H_{\beta,\nu}$ is decreasing in $(0,u_*)$, and hence $d=H_{\beta,\nu}(u_*)<H_{\beta,\nu}(0)=\beta^2(1-q)$.

For $(\beta,h)$ satisfying~\eqref{e:AT} with equality, we have $H_{\beta,\nu}'(0)=0$, and $H_{\beta,\nu}$ attains its infimum over $\mathcal O_{\beta,\nu}$ at $0$, which implies $d=H_{\beta,\nu}(0)=\beta^2(1-q)$.
\end{proof}

\section{Proof of Theorem~\ref{t:pos}}
\label{s:p-pos-ind}

\ch
\begin{proof}[Proof of Theorem~\ref{t:pos}]
As in~\eqref{e:def-H}, we evaluate the Hessian $\bs H(\bs m)$ from~\eqref{e:def-Hm} in $\bs m\in[-1,1]^N$ as
\begin{equation}
	\bs H(\bs m) = \frac{\beta}{\sqrt{N}}\bs{\bar g} + \bs A_N -\beta^2(1-q)\bs 1 + \eta \bs 1
\end{equation}
where now
\begin{equation}
	A_{N,ij} = \frac{-\delta_{ij}}{1-m_i^2} + \frac{2\beta^2}{N}m_im_j\,,\quad i,j=1,\ldots,N,
\end{equation}
\begin{equation}
	\eta= \frac{\beta^2}{N}\sum_{i=1}^N m_i^2 - \beta^2 q\,.
\end{equation}
The assumptions and~\eqref{e:q} imply $\frac1N\sum_{i=1}^N m_i^2 \to \EE\tanh^2(h+\beta\sqrt{q}Z)=q$ as $N\to\infty$.

First we study the eigenvalues of the matrix $\bs A_N$ via its resolvent.
For $u>0$, we define the diagonal matrix $\bs D =\diag\left(u + \frac{1}{1-m_i^2}\right)$, so that the resolvent reads
\begin{equation}
	(u\bs 1 - \bs A_N)^{-1} = (\bs D - 2\beta^2\bs m\otimes\bs m)^{-1} = \bs D^{-1/2} \left(\bs 1 - 2\beta^2 (\bs D^{-1/2}\bs m)\otimes(\bs m\bs D^{-1/2})\right)^{-1}\bs D^{-1/2}.
\end{equation}
The Sherman-Morrison Lemma~\cite{SM} thus gives that  $u\bs 1 - \bs A_N$ is invertible if and only if $2\beta^2\tr \bs D^{-1}\bs m\otimes\bs m\ne 1$. The latter condition is equivalent to
\begin{equation}\label{e:p-SM}
	\frac{2\beta^2}{N}\sum_{i=1}^N \frac{m_i^2}{u+\frac{1}{1-m_i^2}}\ne 1
\end{equation}
and also to $\bs A_N$ having an eigenvalue at $u$.

To show (ii), we now assume that $\bs m_N\in \bar P^{2,\epsilon}_N$ and let $\epsilon'>0$.
The expression on the left-hand side of~\eqref{e:p-SM} converges to
\begin{equation}\label{e:p-SM-c}
	2\beta^2 \EE \left(\frac{\tanh^2(h+\beta\sqrt{q}Z)}{u+\frac{1}{1-\tanh^2(h+\beta\sqrt{q})}}\right)
\end{equation}
as $N\to\infty$. In particular, for $u=0$ the expression in~\eqref{e:p-SM-c} is larger than $1+\epsilon$ by definition of $\bar P_N^{2,\epsilon}$ and the assumptions. Moreover, \eqref{e:p-SM-c} decreases continuously to $0$ as $u\to\infty$, hence it is equal to $1$ at some $u>0$. It follows that there exist $0<u_{N,-}< u_{N,+}$ and $N_0\in\N$, depending only on $\beta$ and $h$, such that for all $N\ge N_0$, the left-hand side of~\eqref{e:p-SM} is larger than $1+\epsilon'$ when evaluated at $u=u_{N,-}$, and smaller than than $1-\epsilon'$ when evaluated at $u=u_{N,+}$.
As also the expression on the left-hand side of~\eqref{e:p-SM} is continuous, it follows that it is equal to $1$ for some $u_N$ in $(u_{N,-},u_{N,+})$. As $\epsilon'>0$ was arbitrary, it follows that $u_N$ converges to the $u>0$ at which the expression in~\eqref{e:p-SM-c} is equal to $1$.
From~\eqref{e:p-SM}, it then follows that $\bs A_N$ has an eigenvalue at $u_N$. As the expression on the left-hand side of~\eqref{e:p-SM} is decreasing in $u$, it furthermore follows that $\bs A_N$ does not have eigenvalues that are larger than $u_N$. Hence, we have $\lambda_1(\bs A_N)=u_N$.

Let now $u_\infty=\lim_{N\to\infty} u_N$ and $\nu:=\mathcal L(\frac{-1}{1-\tanh^2(h+\beta\sqrt{q}Z)})$. From the proof of Lemma~\ref{l:suppfc}, we recall that $H'_{\beta,\nu}(u)>0$ for all $u>0$. Noting that $u_\infty\in \mathcal O_{\beta,\nu}$, and that assumption~\eqref{e:ass-bulk} is satisfied as $\supp\nu=(\infty,-1]$, we can now apply Lemma~\ref{l:maxgen}\ref{i:outlier} to obtain
	\begin{equation}\label{e:p:1.5}
		\lim_{N\to\infty}\lambda_1\left(\frac{\beta}{\sqrt{N}} \bs{\bar g} + \bs A_{N} \right) =  H_{\beta,\nu}(u_\infty) \quad\text{a.\,s.}
		\end{equation}
	As $H_{\beta,\nu}(u_\infty)-\beta^2(1-q)>0$, assertion (ii) follows.
	
	To show assertion (i), we first note that analogously to the above, there exists $u_\infty>0$ such that the expression in~\eqref{e:p-SM-c} equals $1$ at $u=-u_\infty$, and that this implies $\lambda_1(\bs A_N)\to -u_\infty$ as $N\to\infty$. If $-u_\infty\in\mathcal O_{\beta,\nu}$, the assertion follows as in~\eqref{e:p:1.5} and as $H_{\beta,\nu}(-u_\infty)<H_{\beta,\nu}(0)=\beta^2(1-q)$. If $-u_\infty\notin\mathcal O_{\beta,\nu}$, the assertion follows from Lemma~\ref{l:maxgen}\ref{i:edge} and Theorem~\ref{t:H0}.
\end{proof}

\ech 
\section{Proof of Theorem~\ref{t:pos-macr}}\label{s:p-pos-macr}
In Theorem~\ref{t:pos-macr}, we rely on a specific magnetization $\bfm_N$ at which we evaluate the Hessian of the TAP functional: for $N\in\N$, let $\bs v$ be an eigenvector to the largest eigenvalue of $\beta N^{-1/2}\bar\bfg$ with $\|\bs v\|_2=1$, then we recall that $\beta N^{-1/2} \bs v^T \bar \bfg \bs v \to 2\beta$ a.\,s. For $\alpha\in[0,1]$ to be chosen later, we define the magnetization $\bfm^\alpha_N$ by
\begin{equation}\label{e:def-msign}
	m^\alpha_{N,i} = \alpha\,\sign(v_i),\quad i=1,\ldots,N.
\end{equation}
First we note that for $\beta>0$ and $\alpha^2>1-1/\beta$,
\begin{equation}\label{e:Pl1-alpha}
	\frac{\beta^2}{N}\sum_{i=1}^{N} \left(1-{m^\alpha_{N,i}}^2\right)^2
	=\beta^2\left(1-\alpha^2\right)^2<1,
\end{equation}
and thus $\bfm^\alpha_N\in P^1_N$.
\label{s:p-pos2}
\begin{proof}[Proof of Theorem~\ref{t:pos-macr}]
	Let $\bfm_N=\bfm^\alpha_N$ and $\bs v$ be defined by~\eqref{e:def-msign}. As in~\eqref{e:def-H}, we evaluate $\bs H = \bs H(\bfm_N)$ as follows:
	\begin{equation}
		\begin{split}
			&H_{ij}=\frac{\beta}{\sqrt{N}} \bar g_{ij} + \frac{2\beta^2\alpha^2}{N}\, \sign(v_i)\,\sign(v_j),\quad i,j=1,\ldots,N, i\ne j \\ 
			&H_{ii}=-\beta^2 \left(1- \alpha^2\right) -\frac{1}{1-\alpha^2} +\frac{2\beta^2\alpha^2}{N}. 
		\end{split}
	\end{equation}
	We now estimate $\bs v^T\bs H\bs v$ which is a lower bound for $\lambda_1(\bs H)$.
	First, recall that $\bs v^T\frac{\beta}{\sqrt{N}} \bar\bfg \bs v\to 2\beta$ a.\,s.\ as $N\to\infty$. The random vector $\bs v$ is distributed as the first column of a Haar distributed random matrix on the orthogonal group on $\R^N$ (see e.\,g.\ Corollary~2.5.4 in~\cite{AGZ}). Hence, by Lemma~\ref{l:Haar-abs} below,
	\begin{equation}\label{e:claim-absv}
		\frac{2\beta^2\alpha^2}{N}\sum_{i,j=1}^N v_i\,\sign(v_i)\,\sign(v_j)\, v_j \to \frac{4\beta^2\alpha^2}{\pi}
	\end{equation}
	in probability as $N\to\infty$. It follows that
	\begin{equation}\label{e:pos-macr-p}
		\bs v^T \bs H\bs v\to 2\beta -\beta^2 \left(1- \alpha^2\right) -\frac{1}{1-\alpha^2} +\frac{4\beta^2\alpha^2}{\pi}
	\end{equation}
	in probability as $N\to\infty$. For fixed $\beta>0$, the expression on the r.h.s.\ attains its maximum at
	$\alpha^2=1-\beta^{-1}(1+4/\pi)^{-1/2}$ which is larger than $1-\beta^{-1}$ and hence $\bfm^\alpha\in P^1_N$ by~\eqref{e:Pl1-alpha}.
	The value of the maximum of the r.h.s.\ of~\eqref{e:pos-macr-p} is strictly positive for $\beta>\tfrac{\pi}{2}\left(\sqrt{1+4/\pi}-1\right)=:\beta_0\approx 0.798$.
\end{proof}
\begin{lem}\label{l:Haar-abs}
	Let $\bs v$ be distributed as the first column of a Haar distributed random matrix on the orthogonal group on $\R^N$. Then,
	\begin{equation}\label{e:Haar-abs}
		\frac{1}{\sqrt{N}}\sum_{i=1}^N \left|v_i\right|\to \sqrt{2/\pi}
	\end{equation}
	in probability as $N\to\infty$.
\end{lem}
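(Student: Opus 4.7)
The plan is to realize $\bs v$ via the standard Gaussian representation of the uniform distribution on the sphere. It is well known (and can be deduced from the invariance of the standard Gaussian distribution on $\R^N$ under orthogonal transformations) that the first column of a Haar distributed random matrix on the orthogonal group is uniformly distributed on the unit sphere $S^{N-1}\subset\R^N$. Hence we may write
\begin{equation}
	\bs v \stackrel{d}{=} \frac{\bs g}{\|\bs g\|_2},
\end{equation}
where $\bs g=(g_1,\ldots,g_N)$ is a vector of i.i.d.\ standard Gaussian random variables. This reduces the problem on the sphere to a law of large numbers statement for independent random variables.

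With this representation, I would rewrite
\begin{equation}
	\frac{1}{\sqrt N}\sum_{i=1}^N |v_i|
	= \frac{\tfrac1N\sum_{i=1}^N |g_i|}{\tfrac1{\sqrt N}\|\bs g\|_2}.
\end{equation}
The strong law of large numbers yields $\tfrac1N\sum_{i=1}^N |g_i|\to \EE|g_1|=\sqrt{2/\pi}$ almost surely (using $\EE|Z|=\sqrt{2/\pi}$ for a standard Gaussian $Z$), and $\tfrac1N\sum_{i=1}^N g_i^2\to 1$ almost surely, so that $\tfrac1{\sqrt N}\|\bs g\|_2\to 1$ almost surely. Combining these with the continuous mapping theorem (or Slutsky's lemma) gives the claimed convergence to $\sqrt{2/\pi}$, in fact almost surely and not merely in probability.

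There is essentially no obstacle here beyond citing the standard Gaussian representation of the uniform measure on the sphere; the heart of the argument is the elementary observation $\EE|Z|=\sqrt{2/\pi}$ combined with two applications of the law of large numbers. No truncation or concentration argument is required, though one could alternatively produce a direct concentration bound via Gaussian concentration on the sphere, which would also immediately yield~\eqref{e:Haar-abs}.
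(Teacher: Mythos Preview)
Your proof is correct and in fact cleaner than the paper's. The paper proceeds by a second-moment argument: it shows $\E\big[N^{-1/2}\sum_i|v_i|\big]\to\sqrt{2/\pi}$ and $\E\big[(N^{-1/2}\sum_i|v_i|)^2\big]\to 2/\pi$, citing results of Meckes and of Diaconis--Freedman on the approximate Gaussianity of finitely many coordinates of a Haar column, together with uniform integrability. By contrast, you bypass these references entirely by invoking the exact Gaussian representation $\bs v\stackrel{d}{=}\bs g/\|\bs g\|_2$ of the uniform measure on the sphere and reducing to two applications of the strong law of large numbers. Your route is more elementary and self-contained; the paper's route has the minor advantage of working directly with the Haar column without introducing an auxiliary coupling.

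One small caveat: your concluding remark ``in fact almost surely and not merely in probability'' is not quite justified as stated. You have shown almost sure convergence for the particular coupling $\bs v^{(N)}=\bs g^{(N)}/\|\bs g^{(N)}\|_2$ with the $g_i$ arranged as an i.i.d.\ sequence, but the lemma only specifies the marginal law of $\bs v$ for each $N$, not a joint coupling across $N$. Convergence in probability transfers because it depends only on marginals; almost sure convergence does not. Since the lemma only asks for convergence in probability, this does not affect the validity of your argument.
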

\begin{proof}
	First we consider the expectation
	\begin{equation}
		\frac{1}{\sqrt{N}}\sum_{i=1}^N\E \left|v_i\right|= \E\sqrt{N}\left|v_1\right|,
	\end{equation}
	which converges to $\E|Z|=\sqrt{2/\pi}$ by \bch e.g. Proposition~2.5 of~\cite{Me} (or by the convergence in the total variation distance from~(1) in~\cite{DF87} along with uniform integrability, obtained from boundedness of the second moment $\E N|v_1|^2=\E\sum_{i=1}^N |v_i|^2 =1$). \ech
	Likewise, for the second moment, we have
	\begin{equation}
		\frac{1}{N}\sum_{i,j=1}^N\E \left|v_i\right|\left|v_j\right|= (N-1)\E \left|v_1\right| \left|v_2\right| + \E v_1^2.
	\end{equation}
	Here the second term on the r.h.s.\ converges to zero, and the first term to $\left(\E|Z|\right)^2$ again
	\bch by e.g.\ Proposition~2.5 of \cite{Me}, or by (1) in~\cite{DF87} along with boundedness of the second moment
	\begin{equation}
		(N-1)^2 \E \left|v_1\right|^2 \left|v_2\right|^2 \le \E\left( \sum_{i=1}^N\left|v_i\right|^2 \sum_{j=1}^N\E\left|v_j\right|^2\right)= 1.
	\end{equation}\ech
	This shows that the variance of the expression on the l.h.s.\ of~\eqref{e:Haar-abs} converges to zero, so that the convergence of the expectation implies the assertion.
\end{proof}

The authors have no competing interests to
declare.

\end{document}